\patchcmd{\section}{\scshape}{\bfseries}{}{} \makeatletter
\renewcommand{\@secnumfont}{\bfseries} \makeatother
\theoremstyle{definition}
\theoremstyle{plain} \newtheorem{theorem}{Theorem}[section]
\theoremstyle{plain} \newtheorem{lemma}[theorem]{Lemma}
\theoremstyle{plain} \newtheorem{proposition}[theorem]{Proposition}
\theoremstyle{plain} 
\theoremstyle{plain} \newtheorem{corollary}[theorem]{Corollary}
\theoremstyle{remark} \newtheorem*{remark}{Remark}
\theoremstyle{definition} \newtheorem{definition}[theorem]{Definition}
\theoremstyle{definition} \newtheorem*{definition*}{Definition}
\theoremstyle{definition} 
\theoremstyle{definition} \newtheorem{notation}[theorem]{Notation}
\makeatletter \renewenvironment{proof}[1][\proofname]
{\par\pushQED{\qed}\normalfont\topsep6\p@\@plus6\p@\relax\trivlist\item[\hskip\labelsep\bfseries#1\@addpunct{.}]\ignorespaces}{\popQED\endtrivlist\@endpefalse}
\newcommand{\N}{\mathbb{N}}
\newcommand{\Q}{\mathbb{Q}}
\newcommand{\R}{\mathbb{R}}
\newcommand{\Z}{\mathbb{Z}}
\newcommand{\Rpos}{\R_{> 0}}
\newcommand{\Zpos}{\Z_{> 0}}
\newcommand{\Zp}[1][p]{\Z_{#1}}
\newcommand{\Qp}[1][p]{\Q_{#1}}
\newcommand{\RmodZ}{\R/\Z}
\DeclarePairedDelimiter{\parens}{\lparen}{\rparen}
\DeclarePairedDelimiter{\brackets}{\lbrack}{\rbrack}
\DeclarePairedDelimiter{\set}{\lbrace}{\rbrace}
\DeclarePairedDelimiter{\abs}{\lvert}{\rvert}
\newcommand{\setofall}{\left\{}
\newcommand{\suchthat}{:}
\newcommand{\setend}{\right\}}
\newcommand{\norm}[1]{\abs*{ #1 } }
\newcommand{\pnorm}[2][p]{ \norm{#2}_{#1} }
\newcommand{\pappby}[3][p]{W_{#1}\parens*{#2, #3}}
\newcommand{\appby}[2]{\pappby[\empty]{#1}{#2}}
\newcommand{\m}[2][\empty]{\mu_{#1}\parens*{#2}}
\newcommand{\eps}{\varepsilon}
\newcommand{\F}[2][B]{\mathscr{F}_{#2}^{#1}}
\newcommand{\psum}{\sideset{}{'}\sum}
\newcommand{\da}{\,d\alpha}
\newcommand{\bphi}[1][B]{\varphi^{#1}}
\newcommand{\man}[2]{\max\set{ \abs{#1}, \abs{#2} } }
\NewDocumentCommand{\dan}{
  O{a}
  D[]{n}
  }{%
  \delta_{(#1,#2), \psi}
}
\newcommand{\Haus}[1][]{\mathcal{H}^{#1}}
\newcommand{\haus}[2][f]{\Haus[#1]\parens*{#2} }
\newcommand{\hdim}[1]{\mathrm{dim}_{\Haus}\parens*{#1}}
\begin{document}

% Header Info.
\title[Neighborhoods in different places]{Khintchine's Theorem with rationals coming from neighborhoods in different places}
\author[A.~P.~Oliveira]{Andre P. Oliveira}
\keywords{Diophantine approximation, Metric Number Theory, $p$-Adic numbers, Khintchine's Theorem}
\address{Wesleyan University, Middletown CT, USA} 
\email{aoliveira@wesleyan.edu}
\date{\today}

\begin{abstract}
  The Duffin--Schaeffer Conjecture answers a question on how well one can approximate irrationals by rational numbers in reduced form (an imposed condition) where the accuracy of the approximation depends on the rational number.
  It can be viewed as an analogue to Khintchine's Theorem with the added restriction of only allowing rationals in reduced form.
  Other conditions such as numerator or denominator a prime, a square-free integer, or an element of a particular arithmetic progression, etc. have also been imposed and analogues of Khintchine's Theorem studied.
  We prove versions of Khintchine's Theorem where the rational numbers are sourced from a ball in some completion of $\Q$ (i.e. Euclidean or $p$-adic), while the approximations are carried out in a distinct second completion.
  Finally, by using a mass transference principle for Hausdorff measures, we are able to extend our results to their corresponding analogues with Haar measures replaced by Hausdorff measures, thereby establishing an analogue of Jarn\'ik's Theorem.
\end{abstract}

\thispagestyle{empty}

\maketitle

\tableofcontents

\section{Introduction}

Loosely speaking, the field of metric Diophantine approximations is concerned with approximating elements of a metric space by a specified dense subset, where the accuracy of approximation is measured against the particular element in the dense subset.
Classically, this is posed as: given a real number $x$, how small can $\abs{x - \frac{a}{n}}$ be, as a function of $n$, with $\frac{a}{n} \in \Q$. 
One potential answer to this question is to investigate the convergence or divergence of a particular sum concerning the rate of approximation:
\begin{theorem}[Khintchine's Theorem, \cite{khintchine}]\label{thm:khintchine}
  If $\psi : \N \to \Rpos$ is a monotonic function, then \[
    \lambda( W(\psi) ) = \begin{cases}
      0 & \sum\limits_{n=1}^\infty n \psi(n) < \infty \\
      1 & \sum\limits_{n=1}^\infty n \psi(n) =   \infty, \\
  \end{cases}
  \]
  where $\lambda$ is Lebesgue measure and \[ W(\psi) = \setofall x \in [0,1] \suchthat \abs*{x - \frac{a}{n} } < \psi(n), a \in \Z, a \le n, \emph{ for i.m. } n \in \N \setend, \]
  where i.m. stands for ``infinitely many.''
\end{theorem}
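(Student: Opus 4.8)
The plan is to realize $W(\psi)$ as a $\limsup$ set and treat the two halves by the two halves of the Borel--Cantelli lemma. Concretely, write
\[
  W(\psi) = \limsup_{n\to\infty} A_n, \qquad A_n = \bigcup_{0 \le a \le n} \parens*{\tfrac{a}{n} - \psi(n),\ \tfrac{a}{n} + \psi(n)} \cap [0,1],
\]
so that $\lambda(A_n) \le 2(n+1)\psi(n)$ always, with $\lambda(A_n) = 2n\psi(n)$ and the constituent intervals pairwise disjoint once $\psi(n) < \tfrac{1}{2n}$. \emph{Convergence case.} No monotonicity is needed: if $\sum_n n\psi(n) < \infty$ then $\sum_n \lambda(A_n) \le 4\sum_n n\psi(n) < \infty$, and the convergence (elementary) half of Borel--Cantelli gives $\lambda(W(\psi)) = \lambda\parens*{\limsup_n A_n} = 0$.

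\emph{Divergence case, reduction.} First dispose of a trivial subcase: if $\psi(n) \ge \tfrac{1}{2n}$ for infinitely many $n$, then for each such $n$ the intervals of radius $\psi(n)$ about $0, \tfrac1n, \dots, 1$ already cover $[0,1]$, so $A_n = [0,1]$ infinitely often and $W(\psi) = [0,1]$. Hence we may assume $\psi(n) < \tfrac{1}{2n}$ for all large $n$; discarding finitely many $n$ affects neither $W(\psi)$ up to measure zero nor the divergence of the series, so now $\lambda(A_n) = 2n\psi(n)$ and $\sum_n \lambda(A_n) = \infty$.

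\emph{Divergence case, quasi-independence.} The heart of the argument is a quasi-independence-on-average estimate for the $A_n$. For $m \ne n$ one must bound $\lambda(A_m \cap A_n)$: an interval of $A_m$ about $a/m$ meets an interval of $A_n$ about $b/n$ exactly when $\abs{an - bm} < mn\parens*{\psi(m)+\psi(n)}$, for each admissible nonzero value of $an - bm$ the number of pairs $(a,b)$ in the relevant box is $\le \gcd(m,n)+1$ (successive solutions differ by $(m,n)/\gcd(m,n)$), and each such intersection has length at most $2\min\set{\psi(m),\psi(n)}$. Feeding in monotonicity of $\psi$ together with the truncation $\psi(n) < \tfrac1{2n}$ to absorb the lower-order terms yields a bound of the shape $\lambda(A_m \cap A_n) \ll \lambda(A_m)\lambda(A_n) + E_{m,n}$ with $\sum_{m,n\le Q} E_{m,n} \ll \parens*{\sum_{n\le Q}\lambda(A_n)}^2$ for infinitely many $Q$, hence
\[
  \sum_{m,n \le Q} \lambda(A_m \cap A_n) \ \ll\ \parens*{\sum_{n \le Q}\lambda(A_n)}^2 \qquad \text{for infinitely many } Q.
\]
The divergence (quasi-independent) form of Borel--Cantelli, e.g.\ via the Chung--Erd\H{o}s inequality, then gives $\lambda(W(\psi)) = \lambda\parens*{\limsup_n A_n} > 0$.

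\emph{Divergence case, zero--one law.} Finally, positive measure is upgraded to full measure: the whole quasi-independence estimate is local, applying verbatim inside any subinterval $I \subseteq [0,1]$ (using that $A_n$ equidistributes at scales $\gg 1/n$), so $\lambda(W(\psi) \cap I) \gg \lambda(I)$ with an implied constant independent of $I$; the Lebesgue density theorem then forces $\lambda(W(\psi)) = 1$. (Alternatively, invoke Gallagher's zero--one law for $\limsup$ sets of this type.) \emph{Main obstacle.} The delicate point is the intersection estimate $\lambda(A_m \cap A_n)$ and the bookkeeping needed to make its error terms summably small; this is precisely where monotonicity of $\psi$ is essential (the conclusion genuinely fails for suitable non-monotone $\psi$) and where the preliminary truncation $\psi(n)<\tfrac1{2n}$ is used. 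The divergence Borel--Cantelli lemma and the zero--one law are then off-the-shelf.
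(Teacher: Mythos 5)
The paper does not prove this statement at all --- it is quoted with a citation to \cite{khintchine} as a classical input. So the only thing to assess is whether your sketch is internally sound, and there is a genuine gap in the divergence case.

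Your quasi-independence estimate omits the contribution from \emph{exact overlaps}, and this contribution cannot be absorbed. You count pairs $(a,b)$ with $an-bm=k\ne 0$ and bound each intersection by $2\min\set{\psi(m),\psi(n)}$, which correctly gives $\ll\lambda(A_m)\lambda(A_n)$ after summing over admissible $k$. But for $m\ne n$ there are also $\gcd(m,n)+1$ pairs with $an-bm=0$, i.e.\ $a/m=b/n$, and each of those gives a full overlap of length $2\min\set{\psi(m),\psi(n)}$. This term never enters your bound. Its total over $m<n\le Q$ is $\sum_{n\le Q}\psi(n)\sum_{m<n}\parens*{\gcd(m,n)+1}$, and since $\sum_{m\le n}\gcd(m,n)=\sum_{d\mid n}d\varphi(n/d)$ averages to about $n\log n$, the choice $\psi(n)=1/(n^{2}\log n)$ (monotone, with $\sum n\psi(n)=\sum 1/(n\log n)=\infty$) makes this term $\gg\log Q$, while $\parens*{\sum_{n\le Q}\lambda(A_n)}^{2}\asymp(\log\log Q)^{2}$. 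The Chung--Erd\H{o}s hypothesis therefore fails for \emph{every} large $Q$, so the claimed quasi-independence on average is simply false for the unreduced sets $A_n$.

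The standard way around this, which your sketch does not include, is to pass to reduced fractions first: replace $A_n$ by $A_n'=\bigcup_{\gcd(a,n)=1}\parens*{a/n-\psi(n),\,a/n+\psi(n)}$, so that $a/m=b/n$ forces $m=n$ and the $k=0$ overlaps disappear for $m\ne n$. You then need two nontrivial inputs where monotonicity does the real work: (i) that $\sum n\psi(n)=\infty$ implies $\sum\varphi(n)\psi(n)=\infty$ (Abel summation plus $\sum_{m\le N}\varphi(m)\asymp N^{2}$); and (ii) an overlap estimate for the reduced sets whose error involves gcd sums that are harmless under monotonicity but are precisely the obstruction in the Duffin--Schaeffer problem. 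Calling this ``bookkeeping'' undersells where the theorem's content actually sits. Alternatively, the classical proofs (Khintchine's original, or Cassels') avoid the second-moment route entirely with a direct construction, which is why they need less machinery.
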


An analogous statement is known for higher dimensions, $\R^d$, where one simultaneously approximates $d$ real numbers by $d$ rationals with a common denominator.
In the one-dimensional case, it is known that one cannot remove the monotonicity assumption on $\psi$ and obtain the same result; a counter-example may be found in \cite{duffin-schaeffer}.
The recently settled Duffin--Schaeffer Conjecture (see \cite{Maynard-Koukoulopoulos}) can be viewed as a Khintchine-like Theorem rationals that are \emph{restricted} to reduced form.

Other restrictions have also been studied.
For example, an analogue of Khintchine's theorem where one restricts to rationals with prime numerator and denominator was established by Harman \cite{harman_prime_num_and_den}. A discussion of this as well as the more general setup where numerator and denominator come from more general infinite subsets of $\N$ is covered in \cite[Chapter 6]{harman_book}.
Similar higher-dimensional ``primality constraints'' have been investigated by Baier and Ghosh in \cite{baier-ghosh_1,baier-ghosh_2}.
Akin to the ``primality constraints'' work has also been done where the numerator and denominator come from predetermined arithmetic progressions. A reasonably complete theory of this may be found in \cite{faustin-arith_prog}.
Recently there have also been probabilistic approaches, where one considers random numerators for a given denominator, by Ram\'irez \cite{felipe-random}.

The results listed thus far pertain to $\Q$ (or a subset of $\Q$) sitting densely in the metric space, $\R$. What about other metric spaces; for example, $\Qp$, the $p$-adic numbers?

Recall that for any prime $p$ we may define a norm, $\pnorm{\cdot}$, called the \emph{$p$-adic norm} on $\Q$ by \[
  \pnorm{ \frac{a}{b} } = \pnorm{ \frac{a' p^k}{b'} } = p^{-k},
\] where $p \nmid a', b'$.
The \emph{$p$-adic numbers} are the completion of $\Q$ with respect to this $p$-adic norm.
This space is a locally compact and totally disconnected field. It has as its ring of integers \[
  \Zp := \setofall \alpha \in \Qp \suchthat \pnorm{\alpha} \le 1 \setend,
\]
a compact group under addition. Thus $\Zp$ may be endowed with a unique Haar probability measure, which will be denoted by $\m[p]{\cdot}$.

Since $\Qp$ is the completion of $\Q$ under this norm, the rationals sit inside of $\Qp$ as a proper dense subset and are thus a viable set to do Diophantine approximations.
In 1940, Mahler \cite{mahler} studied $p$-adic Diophantine approximations from a continued fraction perspective.
This was followed by Jarn\'ik, in 1945, with an analogue to Khintchine's Theorem in the $p$-adic setting \cite{jarnik}.
In her 1955 thesis, Lutz showed that an analogous higher dimensional Khintchine's Theorem holds for systems of linear forms \cite{lutz}. In a sense, our Theorem \ref{thm:p_to_p} Case 2, where $p_1 = \infty$ and $p_2 = p$, falls into this category.

These two directions in Diophantine approximations (i.e. in $\R$ or $\Qp$) are not disjoint. Palmer in \cite{palmer} shows multiple analogues of theorems in Diophantine approximations (and metric number theory) hold in the setting of ``diagonal Diophantine approximation.'' That is, approximating elements of \[ 
  X = \R \times \Q_{p_1} \times \Q_{p_2} \times \cdots \times \Q_{p_k},
\] by elements of a diagonal embedding of $\Q$ into the space $X$, where $p_i$ are distinct primes for each $i = 1,2,\ldots, k$.

In a similar spirit, Haynes showed \cite[Theorem 4]{haynes} that a higher-dimensional Khintchine's Theorem holds in the setting of simultaneous mixed approximations. That is, approximating elements of
\[
  \R^l \times \Q_{p_1} \times \Q_{p_2} \times \cdots \times \Q_{p_k},
\]
where $l$ and $k$ are nonnegative integers with $l + k > 1$, and $p_1, p_2, \cdots, p_k$ are (not necessarily distinct) primes, by rational vectors ($\Q^{l+k}$).
Further, in the same paper, Haynes shows that if a certain variance method from probability theory (quasi-independence on average) can be used to solve the $p$-adic Duffin--Schaeffer conjecture for even one prime $p$, then almost the entire classical Duffin--Schaeffer conjecture would follow.
Conversely, if the variance method can be used to prove the classical conjecture, then the $p$-adic conjecture is true for all primes.

\subsection{The Setup}

In this article, we take the ``restricted rationals'' approach. The goal is to show when one can approximate the elements of one completion of $\Q$ (a target) by rationals coming from a ball in a distinct other completion of $\Q$ (a source), where the ability to approximate, or not approximate, elements depends solely on the approximating function and not the completions. 

Informally our result says the following. Consider two distinct completions of $\Q$: $\widehat\Q$ and $\widetilde\Q$.
Take a ball, $\widehat{B} \subseteq \widehat{\Q}$, in one completion and consider only the rationals, $Q = \Q \cap \widehat B$, in that ball.
It then follows that almost every (resp. almost no) element of $\widetilde\Q$ may be approximated well (i.e. with respect to a function $\psi$) by rationals in $Q$ if the sum \[
  \sum_{n=1}^\infty n \psi(n)
\]
diverges (resp. converges).

To be more precise, instead of considering the full completions of $\Q$ (i.e. $\R$ and $\Qp$) as our sources we focus on natural compact subsets associated with both. Specifically, we consider approximations between $\Zp$ and $[0,1]$, $\RmodZ$ and $\Zp$, as well as between two distinct $p$-adic completions.

\subsection{Definitions and Result}

As a viability check, it is known that a ball in one completion of $\Q$ is dense in any other completion of $\Q$.
Moreover, since a ball in $\RmodZ$ lifts to balls in $\R$, it follows that the the rationals contained in these equivalence classes, in $\RmodZ$, are also dense in $\Zp$.

With that, we consider the following definitions of $\psi$-approximability in the $p$-adic, Euclidean, and mixed cases.

\begin{definition}
  Let $p_1, p_2$ be two distinct places. Fix a ball $B_{p_1} \subseteq \Zp[p_1]$.
  A $p_2$-adic number $\alpha \in \Zp[p_2]$ is \textbf{$\psi$-approximable by rationals in $B_{p_1}$} if there exist infinitely many rationals $\frac{a}{n} \in B_{p_1}$ such that
  \begin{equation}\label{eq:def_approx}
    \pnorm[p_2]{ \alpha - \frac{a}{n} } < \psi(\man{a}{n});
  \end{equation}
  where $\Zp[\infty] = \RmodZ$
  
  The \textbf{$p_2$-adic $\psi$-approximables by rationals in $B_{p_1}$} are denoted by
  \begin{align*}
    \pappby[p_2]{\psi}{B_{p_1}} := \setofall \alpha \in \Zp[p_2] \suchthat \pnorm[p_2]{ \alpha - \frac{a}{n} } < \psi(\man{a}{n}) \text{ for i.m. } \frac{a}{n} \in B_{p_1} \setend.
  \end{align*}
\end{definition}

Before continuing we clarify a few points of potential issues with notation.
Given a ball $B \subset \RmodZ$ and a coset $\left[ \frac{a}{n} \right] \in B$, 
a rational $\frac{a}{n} \in \Q$, will at times be written $\frac{a}{n} \in B \subseteq \RmodZ$ as shorthand for \[ \brackets*{\frac{a}{n}} := \frac{a}{n} + \Z \in B \subseteq \RmodZ, \] since $B \subseteq \RmodZ$ is by definition the set of equivalence classes of $\frac{a}{n} \in \Q$.
However, when we say the coset $\alpha \in \Zp[\infty] = \RmodZ$ is $\psi$-approximable (e.g. $p_1 = p$, $p_2 = \infty$), we mean that the coset representative of $\alpha$ lying in the unit interval $[0,1)$ is $\psi$-approximable.
That is, when $\Zp[\infty] = \RmodZ$ our set $\pappby[p_2]{\psi}{B_{p_1}} \subseteq \RmodZ$ should be viewed as a subset of the interval $[0,1)$, the canonical fundamental domain of $\RmodZ$.
Then, the inequality in \eqref{eq:def_approx}, becomes the more familiar \[
  \pnorm[\infty]{ \alpha - \frac{a}{n} } < \psi(n),
\] since $a \le n$.
In fact, this viewpoint is at the crux of the proof in the mixed case.

\begin{remark}
  While there have been numerous approaches to studying Diophantine approximations in the $p$-adics, the approach above, where the ``height'' of $\frac{a}{n}$ is $\man{a}{n}$, is taken from the convention used in \cite{jarnik,lutz}.
\end{remark}

With our notation and definitions set, we state the main result of our paper:

\begin{theorem}\label{thm:p_to_p}
  Given two distinct places, $p_1, p_2$, fix a ball $B_{p_1} \subseteq \Zp[p_1]$; we take $\Z_\infty = \RmodZ$.
  If $\psi : \Rpos \to \Rpos$ is monotonically decreasing and for all $s > 1$, there exists $c > 0$, so that $\psi(sn) > c \psi(n)$ for all sufficiently large $n$,  then \[
    \m[p_2]{ \pappby[p_2]{\psi}{B_{p_1}} } = \begin{cases}
      1 & \sum\limits_{n=1}^\infty n \psi(n) = \infty \\
      0 & \sum\limits_{n=1}^\infty n \psi(n) < \infty.
    \end{cases}
  \]
\end{theorem}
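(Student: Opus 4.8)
The plan is to treat the convergence and divergence halves separately, and to organize the argument around the three cases for the pair $(p_1,p_2)$: (i) $p_1 = p$, $p_2 = q$ both finite; (ii) $p_1 = p$ finite, $p_2 = \infty$; (iii) $p_1 = \infty$, $p_2 = p$ finite. In all cases the underlying strategy is the same: reduce the statement to a classical Khintchine-type theorem (in $\R$ or $\Qp$) for a suitably modified approximating function, by understanding the constraint ``$\frac{a}{n} \in B_{p_1}$'' as a congruence-type restriction on the numerator. Concretely, for case (i) fix the ball $B_p = \{ \alpha : \pnorm{\alpha - \frac{a_0}{n_0}} \le p^{-N} \}$; then for a denominator $n$ coprime to $p$, the set of numerators $a$ with $\frac{a}{n} \in B_p$ is a single residue class modulo $p^N$ (and for $n$ divisible by $p$ the ball is either entirely missed or the analysis degenerates, which one handles separately). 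So the problem becomes: approximate $q$-adically by fractions $\frac{a}{n}$ with $a$ in a prescribed arithmetic progression mod $p^N$. This is exactly the ``arithmetic progression'' flavour of Khintchine's theorem, and the $p^N$-periodicity means only a positive proportion $\asymp p^{-N}$ of numerators survive, which is harmless for both halves since it does not affect the convergence/divergence of $\sum n\psi(n)$ (it only changes constants, and here the hypothesis $\psi(sn) > c\psi(n)$ is what lets us absorb the rescaling of the height $\man{a}{n}$ against $n$).

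For the convergence half I would run a straightforward Borel--Cantelli argument. For each $n$, the set of $\alpha \in \Zp[p_2]$ within $\psi(\man{a}{n})$ of some admissible $\frac{a}{n}$ is a union of balls; since $a$ ranges over $O(n)$ values (and only a $p^{-N}$-fraction of them when $p_1$ is finite), and each ball has $\m[p_2]{\cdot}$-measure $\ll \psi(n)$ (using $\man{a}{n} \asymp n$ after discarding the sparse set where $a$ is much smaller than $n$, or bounding crudely), the total measure at level $n$ is $\ll n\psi(n)$. Summability of $\sum n\psi(n)$ then gives measure zero by Borel--Cantelli. A minor technical point: $\man{a}{n}$ can be as small as $\asymp 1$ when $a$ is tiny, but there are few such fractions and $\psi$ is bounded, so these contribute a convergent tail; alternatively one restricts to $a \asymp n$ at the cost of the $\psi(sn) \gg \psi(n)$ hypothesis.

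The divergence half is the real work, and I expect the main obstacle to be establishing the necessary \emph{independence (or quasi-independence on average)} of the approximation events across different denominators $n$, so that a divergent Borel--Cantelli / Paley--Zygmund argument applies. In the $p$-adic target cases this should follow the template of Jarník's and Lutz's proofs (ultrametric balls are either nested or disjoint, which makes overlap estimates clean), combined with the arithmetic-progression bookkeeping from the source ball; in the Euclidean target case (iii) one instead invokes the classical Khintchine divergence machinery for restricted numerators, which is where the observation flagged in the excerpt — that $\pnorm[p]{\alpha - \frac{a}{n}} < \psi(n)$ with $\frac{a}{n}$ in a $p$-adic ball means $n \equiv n_0 \pmod{p^N}$ roughly, reinterpreting the $p$-adic source constraint as a denominator congruence on the Euclidean side — does the heavy lifting. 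In each case I would: (1) set up the lim-sup set as $\limsup_n E_n$ with $E_n$ the level-$n$ union of balls, (2) show $\sum \m[p_2]{E_n} = \infty$ using divergence of $\sum n\psi(n)$ and the positive-proportion count of admissible numerators, (3) bound $\m[p_2]{E_m \cap E_n}$ by (something like) $\m[p_2]{E_m}\,\m[p_2]{E_n}$ plus an error that is summable or absorbable, using coprimality of distinct reduced denominators to separate the ball-centres, and (4) conclude full measure by the Erdős--Gál / Chung--Erdős divergence Borel--Cantelli lemma together with a zero-one law (invariance of $\pappby[p_2]{\psi}{B_{p_1}}$ under the relevant transitive group action on $\Zp[p_2]$ forces its measure to be $0$ or $1$). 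Finally, the monotonicity and the doubling-type hypothesis on $\psi$ are used throughout to pass between $\psi(\man{a}{n})$ and $\psi(n)$ without changing the convergence class of the governing series.
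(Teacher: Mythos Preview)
Your proposal has the right skeleton --- Borel--Cantelli for convergence, a second-moment/Paley--Zygmund argument for positive measure in the divergence half, and a zero-one law to upgrade to full measure --- and this is indeed the template the paper follows for the $p$-adic target cases. A few points of comparison and one genuine gap:

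\textbf{The zero-one law.} Your justification (``invariance of $\pappby[p_2]{\psi}{B_{p_1}}$ under the relevant transitive group action'') does not work as stated. The set is \emph{not} invariant under any natural transitive action: translating $\alpha$ by $z \in \Z$ sends the approximating fraction $\frac{a}{n}$ to $\frac{a+zn}{n}$, which stays in $B_{p_1}$ (when $p_1 = \infty$, or when $p_1$ is prime and $z \in p_1^k\Z$), but the height $\man{a+zn}{n}$ changes, so $\psi$ is evaluated at a different point. The paper handles this in two steps: first a density-point argument shows $\m[p_2]{\pappby{c\psi}{B}}$ is independent of the constant $c>0$; then a lemma of Lutz (and a modified version for the prime-source case) shows that bounded integer translates of a positive-measure set fill out measure $\ge 1-\eps$, and the regularity hypothesis $\psi(sn) > c\psi(n)$ absorbs the height change into a constant multiple of $\psi$. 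This is where that hypothesis is genuinely needed, not merely for passing between $\psi(\man{a}{n})$ and $\psi(n)$.

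\textbf{The Euclidean-target case.} You mislabel this (your (iii) has $p_2$ finite, so $p$-adic target; the Euclidean target is your (ii)). More substantively, the paper does \emph{not} run a second-moment argument here. Instead it reduces to origin-centred $p$-adic source balls via Cassel's lemma (using the regularity of $\psi$ to undo the translation), so that the admissible rationals are $\frac{ap^k}{b}$ with $p \nmid b$; after rescaling $x \mapsto x/p^k$ this becomes classical approximation with denominators restricted to the $p$-free integers, a set of positive lower density, and a known corollary of the Duffin--Schaeffer theorem (Harman) finishes. Your proposed direct second-moment route could presumably be made to work, but the paper's reduction is cleaner and avoids any overlap estimates in $\R$.

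\textbf{The $p$-adic-target cases.} Here you and the paper agree: the overlap estimate exploits that ultrametric balls are nested or disjoint, and the count of pairs $(a,b)$ with $\pnorm[p_2]{\frac{a}{n}-\frac{b}{m}}$ small is controlled (Lutz's Lemma 4.14 and a companion for $m=n$). The first-moment lower bound needs $\sum_{p \nmid n \le N} \bphi(n) \gg N^2$, which the paper gets from equidistribution of Farey fractions in the source ball.
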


\begin{remark}
  The proofs of the above yield slightly stronger results; the rationals used in the approximations are reduced (i.e. relatively prime numerator and denominator). To emphasize the connection with Khintchine's Theorem we have stated the results in this form.
\end{remark}

We divide the theorem into three cases and prove them separately in Sections \ref{sec:p_to_r}, \ref{sec:r_to_p}, and \ref{sec:p_to_p}, respectively: 
\begin{enumerate}[font=\bfseries,label=Case \arabic*:]
  \item $p_1$ is prime, $p_2 = \infty$,
  \item $p_1 = \infty$, $p_2$ is prime,
  \item $p_1, p_2$ are primes.
\end{enumerate}

\begin{remark}
  As we will see shortly the proofs of Case 2 and Case 3 are, unsurprisingly, very similar; They both reduce to Diophantine approximations occurring in $\Z_{p_2}$ with a counting problem involving rationals coming from \emph{some} source. The nature of these sources poses a technical difficulty in the counting, but the gist of both arguments is the same.
  To that end, and for a clearer presentation, we prove Case 2 on its own and then clarify the changes needed for Case 3 afterward.
\end{remark}

\begin{remark}
  It would be interesting to further develop our results in the contexts of Haynes and Palmer's work.
  That is, simultaneous and diagonal approximations, respectively.
  While this is not answered in this paper, we hope to extend our work in these and other higher-dimensional contexts in the future.
\end{remark}

Lastly, we include a generalization of Khintchine's Theorem to a broader class of Hausdorff measures. See Section \ref{sec:hausdorff_generalization} for the definitions and details.

\begin{theorem}[Jarn\'ik Theorem]\label{thm:jarnik}
  Suppose $\psi : \N \to \Rpos$ is monotonically decreasing and
  for all $s > 1$, there exists some $c > 0$ such that $\psi(sb) > c \psi(b)$.
  Let $f$ be a dimension function.
  Then \[
    \haus{ \pappby[p_2]{\psi}{B_{p_1}} } = \begin{cases}
      \haus{\Zp[p_2]} & \sum\limits_{n=1}^\infty n f(\psi(n)) = \infty \\
      0 & \sum\limits_{n=1}^\infty n f(\psi(n)) < \infty,
    \end{cases}
  \]
  where $\Zp[p_1]$ is the set of $p_1$-adic integers, when $p_1$ is prime, and is the set $\Z_\infty = \RmodZ$, otherwise.
\end{theorem}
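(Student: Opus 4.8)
The plan is to deduce Theorem~\ref{thm:jarnik} from Theorem~\ref{thm:p_to_p} by pairing an elementary covering estimate (for the convergence half) with a mass transference principle (for the divergence half). Throughout I will write $\Omega = \Zp[p_2]$ and $W = \pappby[p_2]{\psi}{B_{p_1}}$, and use that $\Omega$ — be it $\RmodZ$ or $\Zp[p]$ — is Ahlfors $1$-regular, so that its Haar measure $\m[p_2]{\cdot}$ is comparable to $\Haus[1]$; in particular, a Borel set has full Haar measure if and only if it has full $\Haus[1]$-measure. I first dispose of the case $\psi(n) \not\to 0$: then $\psi$ is bounded below by some $L > 0$, and since the rationals of $B_{p_1}$ are dense in $\Omega$ with no isolated points, every point of $\Omega$ lies within $L$ of infinitely many of them, so $W = \Omega$; meanwhile $\sum_n n f(\psi(n)) \ge \sum_n n f(L) = \infty$, so both sides of the claimed identity equal $\haus{\Omega}$ and there is nothing to prove. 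Hence assume $\psi(n) \to 0$.

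\emph{Convergence half.} Suppose $\sum_{n=1}^{\infty} n f(\psi(n)) < \infty$. For each $N$, the limsup set $W$ is covered by the balls centered at the rationals $a/n \in B_{p_1}$ of height $\man{a}{n} \ge N$, each of radius $\psi(\man{a}{n})$. As in Sections~\ref{sec:p_to_r}--\ref{sec:p_to_p}, once representatives are fixed the height is essentially the denominator, and the counting lemmas established there bound the number of rationals of a given height $m$ lying in $B_{p_1}$ by $O_{B_{p_1}}(m)$. Since $\psi(n) \to 0$, these covering balls have arbitrarily small radius once $N$ is large, and the corresponding $f$-dimensional Hausdorff sum is at most $C \sum_{m \ge N} m\, f(\psi(m))$, which tends to $0$ as $N \to \infty$. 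The Hausdorff--Cantelli lemma then gives $\haus{W} = 0$.

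\emph{Divergence half.} Suppose $\sum_{n=1}^{\infty} n f(\psi(n)) = \infty$ and put $\Psi := f \circ \psi$. Using the regularity built into the notion of dimension function — in particular that $r \mapsto f(r)/r$ is monotonic, whence $f(cr) \gg_c f(r)$ for all small $r$ — one checks that $\Psi$ is again monotonically decreasing and still obeys the quasi-regularity hypothesis: for every $s > 1$ there is $c' > 0$ with $\Psi(sn) > c'\Psi(n)$ for all large $n$. Thus $\Psi$ satisfies the hypotheses of Theorem~\ref{thm:p_to_p}, and as $\sum_n n \Psi(n) = \sum_n n f(\psi(n)) = \infty$, that theorem gives $\m[p_2]{\pappby[p_2]{\Psi}{B_{p_1}}} = 1$, equivalently $\Haus[1]\big(\pappby[p_2]{\Psi}{B_{p_1}}\big) = \Haus[1](\Omega)$. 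Now $W = \pappby[p_2]{\psi}{B_{p_1}}$ and $\pappby[p_2]{\Psi}{B_{p_1}}$ are the limsups of the same family of balls centered at the rationals of $B_{p_1}$, with radii $\psi(\man{a}{n})$ and $\Psi(\man{a}{n}) = f(\psi(\man{a}{n}))$ respectively, both tending to $0$; and in the $1$-dimensional Ahlfors-regular setting the dilation appearing in the mass transference principle is exactly $B(x,r) \mapsto B(x, f(r))$. Applying a mass transference principle valid in $\Omega$ therefore promotes the full-$\Haus[1]$-measure statement for $\pappby[p_2]{\Psi}{B_{p_1}}$ to the full-$\Haus[f]$-measure statement $\haus{W} = \haus{\Omega}$, completing the proof.

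The main obstacle is the availability of a mass transference principle in each of the target spaces $\Omega \in \{\RmodZ\} \cup \{\Zp[p] : p \text{ prime}\}$. For $\RmodZ$ this is the classical principle of Beresnevich--Velani; for $\Zp[p]$ one needs its analogue in this ultrametric (yet $1$-Ahlfors-regular) space, with the dimension function $f$ and the dilation $r \mapsto f(r)$ as above. The two remaining ingredients — that $f \circ \psi$ inherits the hypotheses of Theorem~\ref{thm:p_to_p}, and the $O_{B_{p_1}}(m)$ bound on rationals of height $m$ in $B_{p_1}$ — are routine given, respectively, the regularity assumptions on dimension functions and the counting already carried out in the proof of Theorem~\ref{thm:p_to_p}.
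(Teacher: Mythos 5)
Your proposal is correct and follows essentially the same route as the paper: the convergence half by a $\rho$-cover and the Hausdorff--Cantelli argument, and the divergence half by feeding $f\circ\psi$ into Theorem~\ref{thm:p_to_p}, identifying $\m[p_2]{\cdot}$ with $\Haus[1]$, and then invoking the Mass Transference Principle with $g(x)=x$. The only cosmetic difference is that you transfer the regularity hypothesis from $\psi$ to $f\circ\psi$ via monotonicity of $r\mapsto f(r)/r$ while the paper does so via a doubling assumption on $f$; both are standard regularity conditions inherent in the Mass Transference Principle setup and equally implicit in the theorem's statement.
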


From this an analogue of the classic Jarn\'ik--Besicovitch Theorem also follows.
Higher dimensional versions of a $p$-adic Jarn\'ik--Besicovitch Theorem have previously been studied in \cite{abercrombie,dickinson-dodson-yuan}.
\begin{corollary}[Jarn\'ik--Besicovitch Theorem,]\label{cor:jarnik-besicovitch}
  Given two distinct places, $p_1, p_2$, fix a ball $B_{p_1} \subseteq \Zp[p_1]$; we take $\Z_\infty = \RmodZ$. Then, for $\tau \ge 2$,
  \[
  \hdim{\pappby[p_2]{\tau}{B_{p_1}}} = \frac{2}{\tau}
  \]
  where $\pappby[p_2]{\tau}{B_{p_1}} := \pappby[p_2]{q \mapsto q^{-\tau}}{B_{p_1}}$.
\end{corollary}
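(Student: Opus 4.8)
The plan is to derive the corollary directly from the Jarník Theorem (Theorem \ref{thm:jarnik}) by a standard choice of dimension function, namely $f(q) = q^s$ for a well-chosen exponent $s$, together with the mass transference / Hausdorff-measure machinery already invoked in Section \ref{sec:hausdorff_generalization}. The approximating function is $\psi = \psi_\tau : q \mapsto q^{-\tau}$, which is monotonically decreasing and satisfies the quasi-homogeneity hypothesis $\psi_\tau(sn) = s^{-\tau}\psi_\tau(n) > c\,\psi_\tau(n)$ with $c = s^{-\tau}$, so Theorem \ref{thm:jarnik} applies. For each $s > 0$ the dimension function $f_s(q) = q^s$ gives the series $\sum_{n=1}^\infty n f_s(\psi_\tau(n)) = \sum_{n=1}^\infty n^{1 - s\tau}$, which converges precisely when $s\tau - 1 > 1$, i.e. $s > 2/\tau$, and diverges when $s < 2/\tau$ (and at $s = 2/\tau$ the series is $\sum 1/n$, which diverges).

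First I would fix $\tau \ge 2$ and run Theorem \ref{thm:jarnik} with $f = f_s$ for $s$ in the two relevant ranges. For $s < 2/\tau$ the divergence case gives $\haus[f_s]{\pappby[p_2]{\tau}{B_{p_1}}} = \haus[f_s]{\Zp[p_2]}$, which is positive (indeed infinite, since $\Zp[p_2]$ has positive measure in its natural $s_0$-dimensional gauge for the appropriate homogeneous dimension $s_0$, and $s < s_0$ forces $\haus[f_s]{\Zp[p_2]} = \infty$); hence $\hdim{\pappby[p_2]{\tau}{B_{p_1}}} \ge s$ for every $s < 2/\tau$, giving the lower bound $\hdim{\pappby[p_2]{\tau}{B_{p_1}}} \ge 2/\tau$. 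For $s > 2/\tau$ the convergence case gives $\haus[f_s]{\pappby[p_2]{\tau}{B_{p_1}}} = 0$, so $\hdim{\pappby[p_2]{\tau}{B_{p_1}}} \le s$ for every such $s$, giving the matching upper bound $\hdim{\pappby[p_2]{\tau}{B_{p_1}}} \le 2/\tau$. Combining the two inequalities yields $\hdim{\pappby[p_2]{\tau}{B_{p_1}}} = 2/\tau$.

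One small point to address carefully is the interpretation of $\haus{\Zp[p_2]}$ for a general dimension function: the natural ambient object ($[0,1)$ when $p_2 = \infty$, or $\Zp[p_2]$ when $p_2$ is prime) has Hausdorff dimension $1$, so $\haus[f_s]{\Zp[p_2]}$ is $\infty$ for $s < 1$ and $0$ for $s > 1$. Since $\tau \ge 2$ forces $2/\tau \le 1$, every $s < 2/\tau$ satisfies $s < 1$, so the divergence conclusion genuinely delivers an infinite (in particular nonzero) $f_s$-measure, and the lower-bound argument goes through without the degenerate case $s \ge 1$ ever intruding. The only mild subtlety — and the step I expect to require the most care — is making sure the quasi-homogeneity hypothesis and the monotonicity hypothesis of Theorem \ref{thm:jarnik} are genuinely met by $\psi_\tau$ on the domain $\N$ (they are, trivially, as noted above), and that the endpoint behavior at $s = 2/\tau$ is not needed for the dimension computation — it is not, since the supremum/infimum characterization of Hausdorff dimension only requires the open ranges $s < 2/\tau$ and $s > 2/\tau$. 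No new ideas beyond Theorem \ref{thm:jarnik} are required; the corollary is a formal consequence.
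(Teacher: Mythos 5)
Your proposal is correct and follows essentially the same route as the paper: apply Theorem~\ref{thm:jarnik} with $f(r)=r^s$ and $\psi(q)=q^{-\tau}$, reduce to the convergence/divergence of $\sum n^{1-s\tau}$, and read off the critical exponent $s=2/\tau$ using $\haus[s]{\Zp[p_2]}=\infty$ for $s<1$. If anything, your treatment of the boundary is a touch cleaner than the paper's: by working only with the open ranges $s<2/\tau$ and $s>2/\tau$ you correctly cover the endpoint $\tau=2$ (where $2/\tau=1$), whereas the paper's own proof momentarily appeals to ``$\tau>2$'' despite stating the corollary for $\tau\ge 2$.
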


\begin{remark}
  As we will see, the proof of this theorem is stronger than merely a dimension result.
  The method of proof tells us that $\pappby[p_2]{\tau}{B_{p_1}}$ has infinite Hausdorff $s$-measure at the critical exponent, $s = \frac{2}{\tau}$.
\end{remark}

\section{Some useful Lemmas}

We recall here some classical and elementary results that will be of use later.
The following Lemma is used throughout the paper to consider $p$-free series.
\begin{lemma}
  Suppose $\psi : \Rpos \to \Rpos$ is a monotonically decreasing function. Then,
  \[
    \sum_{n=1}^\infty n \psi(n) = \infty \qquad \text{if and only if} \qquad \sum_{\substack{n=1 \\ p \nmid n}}^\infty n \psi(n) = \infty
  \]
\end{lemma}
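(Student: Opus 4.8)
The plan is to prove the two implications separately, one of which is immediate. Since $\{n\in\N : p\nmid n\}\subseteq\N$, the $p$-free sum is a termwise-smaller subseries of the full sum, so divergence of $\sum_{n:\,p\nmid n} n\psi(n)$ at once forces divergence of $\sum_{n=1}^\infty n\psi(n)$. It remains to establish the converse, which I would phrase contrapositively: assuming $\sum_{n:\,p\nmid n} n\psi(n)<\infty$, I will show that $\sum_{n:\,p\mid n} n\psi(n)<\infty$ as well, and then conclude $\sum_{n=1}^\infty n\psi(n)<\infty$ by adding the two pieces.

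The key point is that each multiple of $p$ can be dominated, using monotonicity alone, by the block of $p-1$ consecutive non-multiples of $p$ that precede it. Fix $m\ge 2$; the integers $j$ with $p(m-1)<j<pm$ are exactly $p-1$ in number and none is divisible by $p$. Since $\psi$ is decreasing, $\psi(j)\ge\psi(pm)$, and since $j\ge p(m-1)+1\ge pm/2$ (the last inequality using $m\ge 2$ and $p\ge 2$), we get $j\psi(j)\ge \tfrac12\,pm\,\psi(pm)$ for each such $j$. Summing over the $p-1$ values of $j$ in the block gives
\[
  \sum_{p(m-1)<j<pm} j\psi(j)\;\ge\;\frac{p-1}{2}\,pm\,\psi(pm),
\]
and then summing over $m\ge 2$, noting that these blocks are pairwise disjoint and their union is precisely $\{\,j>p:\ p\nmid j\,\}$,
\[
  \sum_{\substack{j>p\\ p\nmid j}} j\psi(j)\;\ge\;\frac{p-1}{2}\sum_{m\ge 2} pm\,\psi(pm).
\]
Because $p$ is prime we have $p-1\ge 1$, so the right-hand series converges; adjoining the single finite term $p\psi(p)$ (the case $m=1$) shows $\sum_{n:\,p\mid n} n\psi(n)=\sum_{m\ge 1}pm\,\psi(pm)<\infty$, which completes the proof.

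I do not expect a genuine obstacle here; the only steps needing a moment of care are the elementary estimate $j\ge pm/2$ for $p(m-1)<j<pm$ (valid once $m\ge 2$, using $p\ge2$) and the bookkeeping that the preceding blocks are disjoint and exhaust the non-multiples of $p$ beyond $p$. It is worth remarking that this argument uses only the monotonicity of $\psi$, and not the extra near-homogeneity hypothesis $\psi(sn)>c\,\psi(n)$ assumed elsewhere in the paper.
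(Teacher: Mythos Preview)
Your argument is correct. The paper states this lemma without proof (it is recorded in Section~2 as an elementary preliminary), so there is no approach in the paper to compare against; your block-comparison using monotonicity is a standard and perfectly valid way to supply the missing details.
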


The following is a well known and classical result used in metric Diophantine approximations. It may be found in \cite[Lemma 2.1]{harman_book}.
\begin{lemma}[Cassel's Lemma]\label{lem:cassels}
  Let $\mathscr{I}_k \subseteq \R$ be a sequence of intervals and $\mathscr{E}_k \subseteq \R$ be a sequence of measurable sets such that, for some $\delta \in (0,1)$, \[
    \mathscr{E}_k \subseteq \mathscr{I}_k, \qquad \m{\mathscr{E}_k} \ge \delta \m{\mathscr{I}_k}, \quad \m{\mathscr{I}_k} \to 0.
  \]
  Then the set of points which belong to infinitely many of the $\mathscr{I}_k$ has the same measure as the set of points which belong to infinitely many of the $\mathscr{E}_k$.
\end{lemma}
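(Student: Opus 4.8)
The plan is to show that the two $\limsup$ sets coincide up to a Lebesgue-null set. Write $A = \limsup_k \mathscr{E}_k$ for the set of points lying in infinitely many $\mathscr{E}_k$ and $B = \limsup_k \mathscr{I}_k$ for those lying in infinitely many $\mathscr{I}_k$. Since $\mathscr{E}_k \subseteq \mathscr{I}_k$ for every $k$, we automatically have $A \subseteq B$, so the content of the lemma is the reverse measure inequality, which amounts to $\m{B \setminus A} = 0$. First I would reduce this to a statement about a single tail: setting $E^{(m)} = \bigcup_{k \ge m} \mathscr{E}_k$, so that $A = \bigcap_m E^{(m)}$, one has $B \setminus A = \bigcup_m \parens*{B \setminus E^{(m)}}$, and since this is a countable union it suffices to prove $\m{B \setminus E^{(m)}} = 0$ for each fixed $m$.

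Fix $m$, put $G = B \setminus E^{(m)}$, and suppose for contradiction that $\m{G} > 0$. The key tool is the Lebesgue density theorem: it supplies a point $x \in G$ of density one, i.e. $\m{[x - r, x+r] \setminus G} = o(r)$ as $r \to 0^+$. Because $x \in B$, there are infinitely many indices $k \ge m$ with $x \in \mathscr{I}_k$, and along these $\m{\mathscr{I}_k} \to 0$, so I may choose such a $k$ with $\m{\mathscr{I}_k}$ as small as I like. Writing $\mathscr{I}_k = [a_k, b_k]$ and $\ell_k = \m{\mathscr{I}_k} = b_k - a_k$, the containment $x \in \mathscr{I}_k$ forces $\mathscr{I}_k \subseteq [x - \ell_k, x + \ell_k]$, hence \[ \m{\mathscr{I}_k \setminus G} \le \m{[x - \ell_k, x+\ell_k] \setminus G} = o(\ell_k) = o\parens*{\m{\mathscr{I}_k}}. \] On the other hand, $k \ge m$ gives $\mathscr{E}_k \subseteq E^{(m)}$, so $G \cap \mathscr{E}_k = \emptyset$, i.e. $\mathscr{E}_k \subseteq \mathscr{I}_k \setminus G$, whence $\m{\mathscr{I}_k \setminus G} \ge \m{\mathscr{E}_k} \ge \delta\,\m{\mathscr{I}_k}$. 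Putting the two estimates together gives $\delta \le o(1)$ along our sequence of indices $k$, contradicting $\delta > 0$. Therefore $\m{G} = 0$, and summing over $m$ finishes the proof.

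I expect the only genuine subtlety — the closest thing to an obstacle — to be that the intervals $\mathscr{I}_k$ witnessing $x \in B$ need not be centered at $x$, so the density estimate at $x$ does not apply to them verbatim. This is handled by the elementary remark that an interval of length $\ell_k$ containing $x$ lies inside the symmetric interval $[x - \ell_k, x + \ell_k]$, whose length is only twice as large; since the comparison only needs to hold up to the fixed constant $\delta$, this loss of a factor of $2$ is harmless. Everything else is routine measure theory. (Alternatively, one may simply invoke \cite[Lemma 2.1]{harman_book}.)
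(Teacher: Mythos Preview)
Your argument is correct: the Lebesgue density point argument you outline is a standard and complete proof of this lemma, and your handling of the off-center intervals by enclosing $\mathscr{I}_k$ in $[x-\ell_k,x+\ell_k]$ is exactly the right fix. Note, however, that the paper does not actually supply its own proof of this statement; it simply records the lemma as a classical result and cites \cite[Lemma~2.1]{harman_book} --- which is precisely the alternative you mention in your closing parenthetical. So there is nothing in the paper to compare your argument against beyond that citation.
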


We also recall the following Lemma of Lutz which will be useful in showing a Zero-One Law in Case 2 of Theorem \ref{thm:p_to_p}. In Section \ref{sec:p_to_p} we will make use of a new modified version of this Lemma to prove Case 3 of Theorem \ref{thm:p_to_p}.
\begin{lemma}\cite[Lemma 1.1]{lutz}\label{lem:lutz1.1}
  Suppose $A \subseteq \Zp$ is a positive measure set. Given any $\eps > 0$, there exists a positive integer $c_1 = c_1(\eps)$, such that the set \[
    \setofall \alpha = \alpha' + z \suchthat \alpha' \in A, z \in \Z, \abs{z} < c_1 \setend
  \]
  has measure at least $1 - \eps$.
\end{lemma}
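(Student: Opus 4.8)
The plan is to reduce the statement to a density estimate for $A$ inside a single ball of $\Zp$ and then exploit that the integer translates of any such ball tile $\Zp$. Throughout we may assume $\eps < 1$, since otherwise the conclusion is vacuous.

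First I would use regularity of the Haar measure $\mu_p$ to produce a ball in which $A$ is nearly dense. Choose an open set $U \supseteq A$ with $\mu_p(U \setminus A) < \eps\,\mu_p(A)$. Since $\Zp$ is totally disconnected, $U$ is a countable disjoint union of balls, $U = \bigsqcup_i B_i$, and then $\mu_p(A) = \sum_i \mu_p(A \cap B_i)$ and $\mu_p(U \setminus A) = \sum_i \mu_p(B_i \setminus A)$. If every $B_i$ satisfied $\mu_p(A \cap B_i) < (1-\eps)\mu_p(B_i)$, i.e. $\mu_p(B_i \setminus A) > \eps\,\mu_p(B_i)$, then summing over $i$ would give $\mu_p(U \setminus A) > \eps\,\mu_p(U) \ge \eps\,\mu_p(A)$, contradicting the choice of $U$. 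Hence there is a ball $B = a + p^k\Zp$ with $a \in \Z$, $k \ge 0$, such that $\mu_p(A \cap B) \ge (1-\eps)\mu_p(B) = (1-\eps)p^{-k}$. (Equivalently, one may quote the Lebesgue density theorem in $\Zp$, using the nested balls $x + p^n\Zp$ as differentiation basis, at a density point of $A$.)

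Second, set $c_1 := p^k$ and translate by integers. As $z$ runs over $0, 1, \dots, p^k-1$ — all of absolute value $< c_1$ — the translates $B + z = (a+z) + p^k\Zp$ run over the $p^k$ distinct cosets of $p^k\Zp$, hence partition $\Zp$. Therefore the sets $(A \cap B) + z \subseteq B + z$ are pairwise disjoint, each of measure $\ge (1-\eps)p^{-k}$ by translation invariance, so
\[
  \mu_p\!\left( \bigcup_{z=0}^{p^k - 1} (A + z) \right) \;\ge\; \sum_{z=0}^{p^k-1} \mu_p\big( (A \cap B) + z \big) \;\ge\; p^k \cdot (1-\eps)p^{-k} \;=\; 1 - \eps .
\]
Since $\setofall \alpha' + z \suchthat \alpha' \in A,\ z \in \Z,\ \abs{z} < c_1 \setend$ contains this union, it has measure at least $1-\eps$, as required. (Note each of the finitely many translates $A+z$ is $\mu_p$-measurable, so there is no measurability issue.)

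The only step with real content is the first one, and the only potential subtlety there is justifying the approximation of $A$ by a disjoint union of balls in a non-archimedean space; this is routine, since $\mu_p$ is a Radon measure and the balls form a clopen basis generating the Borel $\sigma$-algebra, so I do not anticipate genuine difficulty. A more conceptual but less explicit alternative would bypass Step 2 entirely: the set $\bigcup_{z \in \Z}(A + z)$ is invariant under translation by the dense subgroup $\Z$ of $\Zp$, hence has $\mu_p$-measure $0$ or $1$ (a one-line Fourier computation on the character group of $\Zp$, since any continuous character trivial on $\Z$ is trivial on all of $\Zp$), and since it contains $A$ its measure is $1$; continuity of $\mu_p$ from below along the increasing sets $E_c = \bigcup_{\abs{z} < c}(A+z)$ then furnishes a positive integer $c_1$ with $\mu_p(E_{c_1}) \ge 1 - \eps$.
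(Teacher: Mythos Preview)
Your proof is correct. The paper does not itself prove Lemma~\ref{lem:lutz1.1} (it is cited from Lutz), but it does prove the closely related modified version, Lemma~\ref{lem:lutz1.1_modified}, and your argument follows essentially the same route as that proof: find a ball in which $A$ has density at least $1-\eps$, then use that integer translates of this ball tile $\Zp$ to spread the density estimate over all of $\Zp$. The only cosmetic difference is that you locate the dense ball via outer regularity and a pigeonhole over a disjoint decomposition of an approximating open set, whereas the paper invokes the $p$-adic Lebesgue density theorem directly (which you also mention as an alternative). Your alternative ergodic/Fourier argument at the end is a nice touch, though it is not the approach taken in the paper.
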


\section{Proof of Theorem \ref{thm:p_to_p}, Case 1}\label{sec:p_to_r}

Fix some ball $B_p \subseteq \Zp$. As in the classical case, the convergence half of the Theorem may be shown directly by a Borel--Cantelli covering argument. More directly, one has \[
  \appby{\psi}{B_p} \subseteq W(\psi),
\] where $W(\psi)$ is again the classical collection of $\psi$-approximables. Thus, with the sum converging we have $\m{W(\psi)} = 0$, by the classical Khintchine Theorem \ref{thm:khintchine} and so $\m{\appby{\psi}{B_p}} = 0$ as well.

Towards the divergence case, one can further restrict to only considering source balls centered at the origin in $\Zp$. Then, applying Cassel's lemma yields the desired result.

\begin{proposition}[Reduction to neighborhoods of zero]
  Let $\psi : \Rpos \to \Rpos$ be a monotonically decreasing function. Suppose that for all $s > 1$, there exists some $c > 0$ such that $\psi(sb) > c \psi(b)$, for all sufficiently large $b$, then \[
    \m{ \appby{\psi}{ B_p } } = \m{ \appby{\psi}{ B_p^0 } },
  \] where $B_p^0$ is the ball $B_p$ translated to be centered at the origin.
\end{proposition}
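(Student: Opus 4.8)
The plan is to transport the two $\limsup$ sets into one another by a small rational rotation of the circle $\RmodZ$, and to absorb the resulting distortion using Cassels' Lemma \ref{lem:cassels}. First one reduces: if $B_p = \Zp$ then $B_p^0 = B_p$ and there is nothing to prove, so $B_p$ has radius $p^{-k}$ with $k \ge 1$, and, $\Z$ being dense in $\Zp$, we may write $B_p = m_0 + p^k\Zp$ with $m_0 \in \{0, 1, \dots, p^k - 1\}$ an integer, so that $B_p^0 = p^k\Zp$; if $m_0 = 0$ then $B_p = B_p^0$, so we may assume $1 \le m_0 \le p^k - 1$. One may also assume $\psi(t) \to 0$, since otherwise the rationals in either ball being dense in $\RmodZ$ forces both $\appby{\psi}{B_p}$ and $\appby{\psi}{B_p^0}$ to equal all of $\RmodZ$.

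Fix $\delta > 0$. Choose a prime $q > p^k/\delta$ (automatically coprime to $p$), let $r$ be the least residue of $m_0 q$ modulo $p^k$ — nonzero, since $p^k \nmid m_0 q$ — and set $c := r/q$; then $c$ is in lowest terms (as $0 < r < p^k < q$ and $q$ is prime), $0 \le c < p^k/q < \delta$, and the image of $c$ in $\Zp$ lies in $m_0 + p^k\Zp = B_p$, so $c \in [0,1) \cap B_p$. Let $R_c \colon \RmodZ \to \RmodZ$ be rotation by $c$. The key transport property is this: if $\frac{a}{n} \in [0,1)$ is a reduced fraction with $\frac{a}{n} \in B_p^0$, then $\frac{a}{n} + c$ has denominator at most $qn$ and its image in $\Zp$ is a sum of an element of $p^k\Zp$ and an element of $m_0 + p^k\Zp$, hence lies in $B_p$; so, provided $\frac{a}{n} + c < 1$ (so that no reduction modulo $1$ is needed), $\frac{a}{n} + c$ is a reduced fraction in $[0,1) \cap B_p$ of height $n' \le qn$, and therefore any $\alpha$ with $\abs*{\alpha - \tfrac{a}{n}} < \psi(n)$ satisfies $\abs*{(\alpha + c) - (\tfrac{a}{n} + c)} < \psi(n) \le \psi(n'/q) =: \widetilde\psi(n')$.

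I would then run this on points bounded away from the seam. If $\alpha \in \appby{\psi}{B_p^0}$ and $\alpha \le 1 - 2\delta$, then among the infinitely many reduced $\frac{a}{n} \in [0,1) \cap B_p^0$ witnessing $\abs*{\alpha - \tfrac{a}{n}} < \psi(n)$ all but finitely many satisfy $\frac{a}{n} < \alpha + \psi(n) < 1 - \delta < 1 - c$, so the transport property gives $R_c(\alpha) = \alpha + c \in \appby{\widetilde\psi}{B_p}$. Since $R_c$ is a genuine translation on $[0, 1 - 2\delta]$ it preserves measure there, whence
\[
  \m{\appby{\psi}{B_p^0}} \le \m{\appby{\psi}{B_p^0} \cap [0, 1 - 2\delta]} + 2\delta \le \m{\appby{\widetilde\psi}{B_p}} + 2\delta .
\]
The hypothesis on $\psi$ with $s = q$ provides $c_q > 0$ with $\psi(qt) > c_q\psi(t)$ for large $t$, so $\psi(h) \le \widetilde\psi(h) < c_q^{-1}\psi(h)$ for large $h$; comparing, for each reduced fraction in $[0,1) \cap B_p$, the arc of radius $\widetilde\psi$ about it with the concentric arc of radius $\psi$ (with heights enumerated so as to tend to $\infty$), Cassels' Lemma \ref{lem:cassels} gives $\m{\appby{\widetilde\psi}{B_p}} = \m{\appby{\psi}{B_p}}$. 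Hence $\m{\appby{\psi}{B_p^0}} \le \m{\appby{\psi}{B_p}} + 2\delta$, and letting $\delta \to 0$ gives $\m{\appby{\psi}{B_p^0}} \le \m{\appby{\psi}{B_p}}$.

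The reverse inequality follows symmetrically with $R_{-c}$: if $\frac{a}{n} \in [0,1) \cap B_p$ is reduced with $\frac{a}{n} \ge c$, then $\frac{a}{n} - c$ is a reduced fraction in $[0,1)$ whose image in $\Zp$ lies in $B_p - m_0 = p^k\Zp = B_p^0$ and which has height $\le qn$, so $R_{-c}$ carries $\appby{\psi}{B_p} \cap [2\delta, 1)$ into $\appby{\widetilde\psi}{B_p^0}$, and Cassels' Lemma then gives $\m{\appby{\psi}{B_p}} \le \m{\appby{\psi}{B_p^0}} + 2\delta$ for every $\delta > 0$. Combining the two bounds proves the Proposition. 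I expect the main obstacle — and the reason for the $\delta$'s — to be the clash between viewing $\appby{\cdot}{\cdot}$ inside the fundamental domain $[0,1)$ and the $p$-adic condition defining the source ball: a rotation of $\RmodZ$ respects that condition only away from the seam $0 \sim 1$, so $c$ must be taken small and the estimates run up to an error $O(\delta)$. The only other cost — that $c$ is a non-integer rational, so denominators are inflated by the bounded factor $q$ — is exactly what the growth hypothesis on $\psi$ together with Cassels' Lemma are there to absorb.
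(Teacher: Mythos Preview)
Your proof is correct and rests on the same core idea as the paper's --- translate by a rational that carries $B_p^0$ to $B_p$ inside $\mathbb{Z}_p$, then use the regularity hypothesis on $\psi$ together with Cassels' Lemma~\ref{lem:cassels} to absorb the bounded inflation of heights --- but the executions differ. The paper translates once by the fixed rational centre $r/s$ of $B_p$ and argues directly, obtaining the chain
\[
\mu\bigl(W(\psi,B_p)\bigr)\le \mu\bigl(W(\psi,B_p^0)\bigr)\le \mu\bigl(W(\tfrac{1}{c}\psi,B_p)\bigr)
\]
and closing it with a single application of Cassels. You instead choose, for each $\delta>0$, a rational $c\in B_p$ with $0<c<\delta$ and prime denominator $q$, rotate by $c$, restrict to $[0,1-2\delta]$ (respectively $[2\delta,1)$) so that no wrap-around modulo $1$ occurs, apply Cassels with the dilated function $\widetilde\psi(h)=\psi(h/q)$, and only then let $\delta\to 0$.

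Your extra machinery is not gratuitous: reducing $\tfrac{a}{n}+c$ modulo $1$ subtracts an integer, and subtracting $1$ changes the $p$-adic residue class (it moves $m_0+p^k\mathbb{Z}_p$ to $m_0-1+p^k\mathbb{Z}_p$), so the transport $B_p^0\to B_p$ genuinely fails for fractions that cross the seam $0\sim 1$ --- a point the paper's write-up passes over. By making the rotation arbitrarily small you confine the failure to a set of measure $O(\delta)$. The price is the limiting step and the Dirichlet-type construction of $c$; the gain is a fully rigorous treatment of the boundary. In both arguments Cassels' Lemma is invoked in exactly the same role, namely to identify $\mu\bigl(W(\psi,\cdot)\bigr)$ with $\mu\bigl(W(C\psi,\cdot)\bigr)$ for a constant $C$ coming from the height distortion.
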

\begin{proof}
  Recall that every ball in $\Zp$ can be expressed as \[ 
    B_p = B_p\parens*{ \frac{r}{s}, p^{-k} }= \setofall \alpha \in \Zp \suchthat \pnorm{\frac{r}{s} - \alpha} < p^{-k} \setend,
  \]
  for some $\frac{r}{s} \in Q$ and $k \ge 0$; that is, it is the $p$-adic ball of radius $p^{-k}$ centered at the rational $\frac{r}{s}$.
  We then have \[
    B_p \cap \Q = \setofall \frac{r}{s} + \frac{a p^k}{b} \suchthat a,b \in \Z, p \nmid b \setend
  \]
  and so \[
    B_p^0 \cap \Q = \setofall \frac{a p^k}{b} \suchthat a,b \in \Z, p \nmid b \setend.
  \]
  
  Given $x \in \appby{\psi}{B_p}$ we have infinitely many solutions to \[
    \norm{ x - \frac{a}{n} } < \psi(n),
  \] with $\frac{a}{n} \in B_p$. Since every rational in $B_p \cap \Q$ is of the form $\frac{r}{s} + \frac{a p^k}{b}$, with $p \nmid b$ the left hand side of the above can be rewritten as \[
    \norm{ x - \parens*{\frac{r}{s} + \frac{a p^k}{b} } }.
  \]
  Then, since $\psi(n)$ is monotonic we have $\psi(sb) \le \psi(b)$ for any $s > 1$, and by rearranging the terms we have infinitely many solutions to \[
    \norm{ \parens*{x - \frac{r}{s} } -  \frac{a p^k}{b} } < \psi(b);
  \] that is, $\parens*{x - \frac{r}{s}} \in \appby{\psi}{B_p^0}$. It then follows that \[
    \m{\appby{\psi}{B_p}} \le \m{\appby{\psi}{B_p^0}}
  \] since translations (i.e. the map $x \mapsto \parens*{x - \frac{r}{s} }$) are measure-preserving and invertible.
  
  Now, taking $x \in \appby{\psi}{B_p^0}$ we have infinitely many solutions, $a \in \Z, b \in \Z \setminus p\Z$, to 
  \begin{align*}
    \norm{ x - \frac{a p^k}{b} } &< \psi(b) \\
    \norm{ \parens*{x + \frac{r}{s}} - \parens*{ \frac{r}{s} + \frac{a p^k}{b} } } &< \psi(b)
  \end{align*}
  and since $\psi(b) < \frac{1}{c} \psi(sb)$ for all sufficiently large $b$ we have infinitely many solutions to \[
    \norm{ \parens*{x + \frac{r}{s}} - \parens*{ \frac{r}{s} + \frac{a p^k}{b} } } < \psi(b) < \frac{1}{c} \psi(sb);
  \] that is, $\parens*{x + \frac{r}{s}} \in \appby{\frac{1}{c} \psi}{B_p}$.
  Again, because this map is a translation we have \[
    \m{\appby{\psi}{B_p^0}} \le \m{\appby{\frac{1}{c} \psi }{B_p}}.
  \]
  So, we have the following chain of inequalities \[
    \m{\appby{\psi}{B_p}}
    \le
    \m{\appby{\psi}{B_p^0}} 
    \le
     \m{\appby{\frac{1}{c} \psi}{B_p}}.
  \] However, by Cassel's Lemma, Lemma \ref{lem:cassels} above, we have \[
  \m{\appby{\psi}{B_p}} =   \m{\appby{\frac{1}{c} \psi}{B_p}}
  \]
  and so \[
  \m{\appby{\psi}{B_p}} = \m{\appby{\psi}{B_p^0}}, 
  \] as needed.  
\end{proof}

\begin{comment}
The argument to apply this Lemma is standard; observe that $ \norm{ x - \frac{a}{n} } < \psi(n) $ is a real ball about $\frac{a}{n}$ of radius $\psi(n)$, which is measurable. Also note that from the monotonicity of $\psi(n)$ we have $c < 1$.
Define \[
  \mathscr{E}_n = \setofall x \in [0,1] \suchthat \norm{x - \frac{a}{n}} < \psi(n) \setend \quad \text{ and } \quad \mathscr{I}_n = \setofall x \in [0,1] \suchthat \norm{x - \frac{a}{n}} < \frac{1}{c} \psi(n) \setend,
\]
then $E_n \subseteq I_n$ and $\m{E_n} = \frac{1}{c} \m{I_n}$.

\end{comment}

With the problem reduced to only considering those sources which are neighborhoods of the origin, it remains to show that the theorem holds for these neighborhoods.

\begin{proposition}
  Let $\psi(n)$ be a function satisfying the conditions in Theorem \ref{thm:p_to_p}, then \[
    \m{\appby{\psi}{B_p^0}} = \begin{cases}
      1 & \sum\limits_{\substack{n=1 \\ p \nmid n }}^\infty n \psi(n) = \infty,\\
      0 & \sum\limits_{\substack{n=1 \\ p \nmid n }}^\infty n \psi(n) < \infty.
  \end{cases}
  \]
\end{proposition}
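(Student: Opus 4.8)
The convergence direction was already disposed of above for an arbitrary ball (and hence for $B_p^0$): one has $\appby{\psi}{B_p^0}\subseteq W(\psi)$, and $\sum_n n\psi(n)<\infty$ if and only if $\sum_{p\nmid n} n\psi(n)<\infty$, so Khintchine's Theorem~\ref{thm:khintchine} forces $\m{\appby{\psi}{B_p^0}}=0$. For the divergence direction my plan is to rescale the problem and invoke a restricted-denominator form of Khintchine's theorem --- which is exactly what the fundamental-domain discussion above is pointing toward.

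Write $p^{-k}$ for the radius of $B_p^0$. By the description of $B_p^0\cap\Q$ recalled earlier, a reduced rational lies in $B_p^0$ with denominator $n$ prime to $p$ precisely when it has the form $\frac{p^k a'}{n}$ with $\gcd(a',n)=1$, $p\nmid n$; such a rational has height $\man{p^k a'}{n}$. The substitution $z=p^{-k}x$ turns the approximation inequality $\norm{x-\frac{p^k a'}{n}}<\psi\!\left(\man{p^k a'}{n}\right)$ into $\norm{z-\frac{a'}{n}}<p^{-k}\psi\!\left(\man{p^k a'}{n}\right)$ --- an \emph{unrestricted} Diophantine inequality for $z$ whose denominators are prime to $p$. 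So the plan is: produce, for almost every $z$, infinitely many good rational approximations with denominator prime to $p$, and transport them back.

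Concretely, I would first note that we may assume $\psi(n)\to 0$, since otherwise every level already covers $[0,1)$ and the statement is immediate. Fix $c=c(2)\in(0,1)$ with $\psi(2n)>c\,\psi(n)$ for all large $n$, and set $\eta(n)=\frac{c}{p^k}\psi(n)$ for $p\nmid n$ and $\eta(n)=0$ otherwise. An Abel-summation comparison using $\sum_{n\le N,\,p\nmid n}\varphi(n)\asymp N^2$ converts the hypothesis $\sum_{p\nmid n}n\psi(n)=\infty$ into $\sum_n \varphi(n)\eta(n)=\infty$, so the Duffin--Schaeffer theorem \cite{Maynard-Koukoulopoulos} (equivalently, since $\psi$ is monotone, Khintchine's theorem applied to fractions with denominator prime to $p$) shows that almost every $z\in[0,1)$, and hence almost every $z\in[0,p^{-k})$, satisfies $\norm{z-\frac{a'}{n}}<\eta(n)$ for infinitely many reduced $\frac{a'}{n}$ with $p\nmid n$. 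For such a $z$ and any witnessing $\frac{a'}{n}$ with $n$ large, $\norm{z-\frac{a'}{n}}<\eta(n)<p^{-k}$ forces $\abs{a'}<2n/p^k$, so $\man{p^k a'}{n}<2n$, and therefore
\[
  \norm{x-\tfrac{p^k a'}{n}} = p^{k}\,\norm{z-\tfrac{a'}{n}} < c\,\psi(n) < \psi(2n) \le \psi\!\left(\man{p^k a'}{n}\right),
\]
with $\frac{p^k a'}{n}$ a reduced rational in $B_p^0$. Hence $x=p^k z$ lies in $\appby{\psi}{B_p^0}$ for almost every $x\in[0,1)$, giving $\m{\appby{\psi}{B_p^0}}=1$; as a bonus the approximating rationals are reduced.

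The only delicate point is the height bookkeeping in the displayed inequality --- replacing $\psi(n)$ by $\psi\!\left(\man{p^k a'}{n}\right)$ with $\man{p^k a'}{n}$ possibly as large as $\approx 2n$ --- and this is precisely what the regularity condition $\psi(sn)>c\psi(n)$ is there to absorb; the comparison $\sum_{p\nmid n}\varphi(n)\psi(n)=\infty\iff\sum_{p\nmid n}n\psi(n)=\infty$ is the only other ingredient and is standard. (Should one prefer not to quote the Duffin--Schaeffer theorem, the divergence case can instead be obtained self-containedly by running the classical second-moment / quasi-independence-on-average argument on the restricted family of centres $\{\frac{p^k a}{n}:p\nmid n\}$ --- where that quasi-independence estimate is the genuine work --- and then upgrading positive measure to full measure by a zero--one law, e.g.\ via invariance modulo null sets under the dense group of translations by $\{\frac{b}{m}:p\nmid m,\ p^k\mid b\}$, with Cassel's Lemma~\ref{lem:cassels} absorbing the constant loss in $\psi$.)
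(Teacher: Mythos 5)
Your proof is correct and follows essentially the same route as the paper: rescale by $p^k$ to reduce to an unrestricted Diophantine inequality with $p$-free denominators, then invoke a restricted-denominator Khintchine theorem via the Duffin--Schaeffer theorem. The only differences are organizational --- you absorb the factor $c/p^k$ directly into the approximating function and verify divergence of $\sum\varphi(n)\eta(n)$ by Abel summation, whereas the paper uses Cassel's Lemma (Lemma~\ref{lem:cassels}) together with Harman's Corollary~3 to the same effect --- and you are usefully more explicit about the height bookkeeping ($\man{p^k a'}{n}$ versus $n$), a point the paper handles implicitly via its convention of taking coset representatives in $[0,1)$.
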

\begin{proof}
  The convergence case is again a direct Corollary of the classical Khintchine Theorem and we are once again left with the divergence case.
  
  Recall that we are investigating solutions to \[
    \norm{x - \frac{a p^k}{b}} < \psi(b),
  \]
  where $p^{-k}$ is the radius of $B_p^0$, $p \nmid b$, and $a \in \Z$ (i.e. $b$ is $p$-free and $a$ has no other restrictions).
  Rephrasing this, we have a solution to the above if and only if we have a solution to
  \begin{equation}\label{eq:scaled_approximation}
    \norm{ \frac{x}{p^k} - \frac{a}{b} } < \frac{1}{p^k} \psi(b).
  \end{equation}
  Again, since $p$ and $k$ are fixed, by the Cassel's Lemma the measure of the set of $x$ satisfying \eqref{eq:scaled_approximation} is equal to the measure of the set of $x$ satisfying
  \begin{equation}\label{eq:stretched_interval}
    \norm{ \frac{x}{p^k} - \frac{a}{b} } < \psi(b).
  \end{equation}
  Recall the following Corollary of the Duffin--Schaeffer Theorem, which can be found in \cite[pg. 41]{harman_book}; the following is rearranged to match the notation used in the present article.
  
  \begin{corollary}[\cite{harman_book}, Corollary 3]
    Let $\psi(n)$ be a positive-valued function which is non-increasing on a set $\mathcal{A} \subset \N$ with positive lower asymptotic density.
    Then for almost every $x \in \R$ there are infinitely many solutions to \[
      \norm{x - \frac{m}{n}} < \psi(n), \qquad (n,m) = 1, \qquad n \in \mathcal{A},
    \]
    if and only if \[
      \sum_{n \in \mathcal{A}} n \psi(n) = \infty.
    \]
    
    Where the \textbf{lower asymptotic density of a set $\mathcal{A} \subseteq \N$} is defined to be \[
     \liminf_{N \to \infty} \frac{1}{N} \sum_{ \substack{n = 1 \\ n \in \mathcal{A} } }^N 1.
    \]
  \end{corollary}
  In our case we have $\mathcal{A} = \N \setminus p \N$, the set of $p$-free positive integers, which has lower asymptotic density $\frac{p-1}{p}$.
  So, almost every real number, $x$, has infinitely many solutions to \[
    \norm{ x - \frac{a}{b} } < \psi(b),
  \] where $a, b \in \Z \setminus p \Z$.
  Thus, almost every real number, $x$, has infinitely many solutions to \eqref{eq:stretched_interval}.
  Hence, $\m{\appby{\psi}{B_p^0}} = 1$, as needed.
\end{proof}
 
\section{Proof of Theorem \ref{thm:p_to_p}, Case 2}\label{sec:r_to_p}

As in the classical setting, the convergence case will follow by a standard Borel--Cantelli covering argument.
Moreover, sets of the form $\pappby{\psi}{B}$ will satisfy a Zero-One Law (Theorem \ref{thm:zero-one}).
The rest of the argument relies on showing that under the divergence assumption $\pappby{\psi}{B}$ has positive measure.
We do this by showing that a certain subset has positive measure (Proposition \ref{prop:pos_mes}).
For clarity and ease of navigation, the key steps in the argument have been separated into their own sections and subsections.

Here and throughout, we fix a ball $B \subseteq \RmodZ$. 
It will also at times be helpful to use the following notation:
\begin{notation}\label{not:F_n^B}
  Given a set $B$, define the collection \[ \F{n} := \setofall (a,n) \in \N^2 \suchthat (a,n) = 1, p \nmid n, \brackets*{\frac{a}{n}} \in B \setend \]
  to be those pairs where the second coordinate is the fixed number $n$.
  
  Notice that with this notation we can also refer to the $\psi$-approximables by $B$ as \[ \pappby{\psi}{B} := \setofall \alpha \in \Zp \suchthat \pnorm{ n \alpha - a } < \psi(\man{a}{n}), \text{ for i.m. } n \in \N \text{ and } (a,n) \in \F{n} \setend. \]
\end{notation}

\subsection{Convergence}

Suppose $\sum\limits_{n=1}^\infty n \psi(n) < \infty$. 
As in Theorem \ref{thm:p_to_p} Case 1, one may appeal to an established Khintchine's Theorem to determine the convergence case. More directly, one can show this by using a Borel--Cantelli covering argument.
Define the sets \[
  U_n = \bigcup_{a = 1}^n B_p\parens*{ \frac{a}{n}, \psi(n)} \qquad \text{and} \qquad
  V_n = \bigcup_{a = 1}^n B_p\parens*{ \frac{n}{a}, \psi(n)}
\]
and consider their limsup set, $\limsup_{n \to \infty} \parens*{U_n \cup V_n}$. For each $n \in \N$ we have \[
  \m[p]{U_n \cup V_n} \le 2 n \psi(n).
\] Moreover, it's clear that \[
  \pappby{\psi}{B} \subseteq \limsup_{n \to \infty}\, \parens*{ U_n \cup V_n }.
\] By our assumption, \[
  \sum_{n = 1}^\infty 2 n \psi(n) = \sum_{n = 1}^\infty n \psi(n) < \infty
\] and so applying the Borel--Cantelli Lemma we conclude $\m[p]{\pappby{\psi}{B} } = 0$.

\subsection{Zero-One Law}

\begin{theorem}\label{thm:zero-one}
  Suppose $\psi : \Rpos \to \Rpos$ is a monotonically decreasing function such that for all $s > 1$, there exists some $c > 0$ so that for all $n$ sufficiently large $\psi(s n) > c \psi(n)$.
  Then $\m[p]{\pappby{\psi}{B}} = 0 \text{ or } 1$.
\end{theorem}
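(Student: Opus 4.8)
The plan is to prove the dichotomy in the form: \emph{if $\m[p]{\pappby{\psi}{B}} > 0$, then $\m[p]{\pappby{\psi}{B}} = 1$}. Two ingredients drive it: (i) a quasi-invariance of $\pappby{\psi}{B}$ under the translation action of $\Z$ on $\Zp$, which is exactly where the growth hypothesis on $\psi$ is used, and (ii) Lutz's Lemma \ref{lem:lutz1.1}, which turns ``a thickening by bounded integer translates has measure close to $1$'' into the desired conclusion. Throughout I use that $\pappby{\psi}{B}$ is the set of $\alpha \in \Zp$ admitting infinitely many rationals $\tfrac{a}{n}$ with $\brackets*{\tfrac{a}{n}} \in B$, $(a,n)=1$, $p \nmid n$, and $\pnorm[p]{\alpha - \tfrac{a}{n}} < \psi(\man{a}{n})$; such a set is a $\limsup$ of balls of $\Zp$, hence Borel, and for every fixed $\alpha$ the admissible pairs that approximate it satisfy $\man{a}{n} \to \infty$ (only finitely many reduced fractions have numerator and denominator bounded by a given $M$).

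\emph{Quasi-invariance.} The first step is to show that for each $z \in \Z$ there is $c_z > 0$ with $\pappby{\psi}{B} + z \subseteq \pappby{c_z^{-1}\psi}{B}$, and that a single constant $c = c(N) > 0$ serves uniformly for all $\abs{z} \le N$, for any fixed integer $N \ge 2$. Indeed, if $\pnorm[p]{\alpha - \tfrac{a}{n}} < \psi(\man{a}{n})$ for an admissible $\tfrac{a}{n}$, then $\tfrac{a}{n} + z = \tfrac{a + zn}{n}$ is again admissible --- its class in $\RmodZ$ is unchanged, as it differs from $\tfrac{a}{n}$ by the integer $z$; numerator and denominator remain coprime; and $p \nmid n$ is untouched --- while $\pnorm[p]{(\alpha + z) - \tfrac{a+zn}{n}} = \pnorm[p]{\alpha - \tfrac{a}{n}}$. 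Only the height changes, and $\man{a + zn}{n} \le (\abs{z}+1)\,\man{a}{n}$. The growth hypothesis applied with $s = \abs{z}+1$ (or with $s = N$ once, together with monotonicity of $\psi$, for uniformity over $\abs{z} \le N$) gives $c_z > 0$ with $\psi(\man{a+zn}{n}) \ge \psi\bigl((\abs{z}+1)\man{a}{n}\bigr) > c_z\,\psi(\man{a}{n})$ for all but finitely many admissible pairs; hence $\alpha + z \in \pappby{c_z^{-1}\psi}{B}$.

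\emph{Conclusion via Lutz's Lemma.} Assume $\m[p]{\pappby{\psi}{B}} > 0$ and fix $\eps > 0$. Lutz's Lemma \ref{lem:lutz1.1} applied to $A = \pappby{\psi}{B}$ produces a positive integer $c_1$ such that $\bigcup_{\abs{z} < c_1}\bigl(\pappby{\psi}{B} + z\bigr)$ has measure at least $1 - \eps$. By the quasi-invariance step (with $N = \max\set{c_1,2}$) and monotonicity of $\psi \mapsto \pappby{\psi}{B}$, this union sits inside $\pappby{c^{-1}\psi}{B}$ for a suitable $c = c(c_1) > 0$, so $\m[p]{\pappby{c^{-1}\psi}{B}} \ge 1 - \eps$. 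It remains to discard the constant. Enumerate the admissible pairs as $(a_j, n_j)$; the radii $\psi(\man{a_j}{n_j})$ tend to $0$ (the case $\psi \not\to 0$ being trivial, since then every ball defining $\pappby{\psi}{B}$ has radius bounded below and a short argument gives full measure). Because $\Qp$ is ultrametric, the ball of radius $c^{-1}\psi(m)$ about a point equals the ball of radius $p^{\ell_m}\psi(m)$ for some $0 \le \ell_m \le \lceil \log_p c^{-1}\rceil$, so it contains the radius-$\psi(m)$ ball with a ratio of Haar measures bounded independently of $m$. The $\Zp$-analogue of Cassel's Lemma (Lemma \ref{lem:cassels}) --- whose proof via the Lebesgue density theorem transfers verbatim --- then yields $\m[p]{\pappby{c^{-1}\psi}{B}} = \m[p]{\pappby{\psi}{B}}$. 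Combining, $\m[p]{\pappby{\psi}{B}} \ge 1 - \eps$ for every $\eps > 0$, hence the measure is $1$ whenever it is positive.

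The step I expect to be the main obstacle is the quasi-invariance: controlling the height $\man{\cdot}{\cdot}$ under $\tfrac{a}{n} \mapsto \tfrac{a}{n} + z$ (letting the numerator range over all of $\Z$) and arranging the loss of a constant uniformly over the window $\abs{z} < c_1$ returned by Lutz's Lemma; a secondary item is to record cleanly the $\Zp$-version of Cassel's Lemma used at the end. The other ingredients --- measurability and monotonicity of $\pappby{\psi}{B}$, and $\man{a}{n} \to \infty$ --- are immediate.
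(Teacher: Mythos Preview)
Your proposal is correct and follows essentially the same strategy as the paper: assume positive measure, invoke Lutz's Lemma~\ref{lem:lutz1.1} to get a bounded-integer thickening of measure $\ge 1-\eps$, show this thickening lies in $\pappby{d\psi}{B}$ via the height estimate $\man{a+zn}{n}\le(1+\abs{z})\man{a}{n}$ together with the regularity hypothesis on $\psi$, and then discard the constant $d$. The only organizational difference is that the paper isolates the last step as a standalone Proposition~\ref{prop:ind_of_c} (proving $\m[p]{\pappby{c\psi}{B}}=\m[p]{\pappby{\psi}{B}}$ directly from the $p$-adic Lebesgue density theorem and the ultrametric ball structure), whereas you package the same density-point argument as a ``$\Zp$-analogue of Cassel's Lemma''; the content is the same.
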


As in the classical case, it will be helpful to first show that the measure of the $\psi$-approximables is unchanged by scaling the approximating function.

\subsubsection{$\m[p]{\pappby{c \psi}{B}}$ is independent of $c$}

\begin{proposition}\label{prop:ind_of_c}
  Given the assumptions of Theorem \ref{thm:zero-one}, we have \[
    \m[p]{\pappby{c \psi}{B}} = \m[p]{\pappby{\psi}{B}},
  \] for all $c > 0$.
\end{proposition}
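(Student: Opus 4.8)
The plan is to prove the two inequalities $\m[p]{\pappby{c\psi}{B}} \le \m[p]{\pappby{\psi}{B}}$ and $\m[p]{\pappby{\psi}{B}} \le \m[p]{\pappby{c\psi}{B}}$, one of which is trivial. If $c \le 1$, then $c\psi(n) \le \psi(n)$ for all $n$, so $\pappby{c\psi}{B} \subseteq \pappby{\psi}{B}$ and the first inequality is immediate; symmetrically, if $c \ge 1$ the reverse containment is immediate. Hence it suffices to show: for every $c > 1$, $\m[p]{\pappby{\psi}{B}} \le \m[p]{\pappby{c\psi}{B}}$ (equivalently, shrinking the radius by a fixed factor does not decrease the measure of the limsup set). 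This is the $p$-adic analogue of the step that Cassel's Lemma handles in Case 1, and indeed the natural approach is to prove a $p$-adic version of Cassel's Lemma.

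First I would set up the limsup structure explicitly. Writing $E_n = \setofall \alpha \in \Zp \suchthat \pnorm{n\alpha - a} < \psi(\man{a}{n}) \text{ for some } (a,n) \in \F{n} \setend$ and $E_n' = \setofall \alpha \in \Zp \suchthat \pnorm{n\alpha - a} < c\psi(\man{a}{n}) \text{ for some } (a,n) \in \F{n} \setend$, we have $\pappby{\psi}{B} = \limsup_n E_n$ and $\pappby{c\psi}{B} = \limsup_n E_n'$, with $E_n \subseteq E_n'$. Each $E_n$ is a finite union of $p$-adic balls (the condition $\pnorm{n\alpha - a} < \psi(\man{a}{n})$ carves out, for fixed $(a,n)$, a ball in $\Zp$ of radius $\pnorm{n}\cdot$(appropriate power of $p$) — note $p \nmid n$ so $\pnorm{n} = 1$ and the radius is just the largest power $p^{-k}$ below $\psi(\man{a}{n})$). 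The key point is that the ratio $\m[p]{E_n}/\m[p]{E_n'}$ is bounded below by a positive constant depending only on $c$ and $p$: enlarging the radius threshold from $\psi(\man{a}{n})$ to $c\psi(\man{a}{n})$ can increase the $p$-adic radius by at most a bounded power of $p$ (namely $p^{\lceil \log_p c \rceil}$), so each ball grows by at most a factor of $p^{\lceil \log_p c\rceil}$ in measure; one then checks that the overlaps among the enlarged balls are controlled, or more simply that $\m[p]{E_n} \ge p^{-\lceil \log_p c\rceil}\m[p]{E_n'}$ holds on the nose because the enlarged set is a disjoint union (in the ultrametric setting, two balls are either disjoint or nested) of balls each containing one of the original balls with index exactly $p^{\lceil \log_p c\rceil}$. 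With a uniform lower bound $\m[p]{E_n} \ge \delta\, \m[p]{E_n'}$ and $E_n \subseteq E_n'$ with $\m[p]{E_n'} \to 0$, the $p$-adic Cassel's Lemma then gives $\m[p]{\limsup E_n} = \m[p]{\limsup E_n'}$, which is exactly the claim.

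The $p$-adic Cassel's Lemma itself should be proved by the same divergence-Borel--Cantelli/density argument as the classical one (Lemma \ref{lem:cassels}): by the first Borel--Cantelli lemma one may assume $\sum_n \m[p]{E_n'} = \infty$ (otherwise both limsup sets are null), and then use a Lebesgue density / quasi-independence argument on $\Zp$ — the ultrametric structure makes the density statement especially clean, since every point of a positive-measure set is a density point along the nested sequence of balls containing it. The main obstacle I anticipate is bookkeeping the radius comparison: one must be careful that $\psi(\man{a}{n})$ need not itself be a power of $p$, so both $E_n$ and $E_n'$ are already defined by rounding the real threshold down to the nearest power of $p$, and one needs the inequality $\lfloor \log_p(c\psi) \rfloor \le \lfloor \log_p \psi \rfloor + \lceil \log_p c \rceil$ uniformly; the hypothesis that $\psi$ is monotonic and the $\psi(sn) > c\psi(n)$ condition are not actually needed for \emph{this} proposition (they enter elsewhere in proving the zero-one law), so I would only invoke what is required. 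Once the radius comparison is pinned down, the rest is the standard application of the covering lemma.
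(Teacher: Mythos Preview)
Your plan and the paper's proof are the same argument at heart: both run a Lebesgue-density contradiction in $\Zp$ to show that shrinking every approximating ball by a fixed factor cannot drop the measure of the limsup set. The paper does this inline---it introduces the tail sets $W_p(c,N)$, picks a density point $\alpha'$ of $\pappby{\psi}{B}\setminus W_p(c,N)$, and derives a contradiction from the fact that a small concentric ball of radius $\asymp c\,p^{-l_1}$ lands inside $W_p(c,N)\cap B_p(\alpha',p^{-l_1})$. What you are proposing is to package exactly that computation as a $p$-adic Cassel's Lemma and then invoke it; the density proof you sketch for that lemma is precisely the paper's argument.

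Two issues in your write-up. First, the inequality you single out for $c>1$ is the trivial one: for $c>1$ we already have $\pappby{\psi}{B}\subseteq\pappby{c\psi}{B}$, so what actually needs proving is $\m[p]{\pappby{c\psi}{B}}\le\m[p]{\pappby{\psi}{B}}$ (your parenthetical gets this right, the displayed inequality does not). Second, and more substantively, Cassel's Lemma requires the larger sets $\mathscr I_k$ to be \emph{single} balls; your $E_n'$ is a union over all $(a,n)\in\F{n}$, and the analogue of Cassel for unions is false in general (take $\mathscr I_n=J_n\cup K_n$ with $J_n,K_n$ intervals of equal length, $\limsup J_n$ null and $\limsup K_n$ of full measure, and set $\mathscr E_n=J_n$). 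The fix is to index by pairs $(a,n)$ rather than by $n$: each $\mathscr I_{(a,n)}=B_p\bigl(\tfrac{a}{n},(c\psi)^*(\man{a}{n})\bigr)$ is then a single ball containing $\mathscr E_{(a,n)}=B_p\bigl(\tfrac{a}{n},\psi^*(\man{a}{n})\bigr)$ with radius ratio a bounded power of $p$, and the radii tend to $0$ along any enumeration with $\man{a}{n}\to\infty$. With that re-indexing your $p$-adic Cassel's Lemma applies and your argument reproduces the paper's. You are also right that the regularity hypothesis $\psi(sn)>c\psi(n)$ plays no role in this particular proposition; the paper only uses it later in the zero--one law.
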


\begin{proof}
  We show \[ \m[p]{\pappby{\psi}{B} \setminus \pappby{c \psi}{B}} = 0, \] for all $0 < c < 1$; note that if $c \ge 1$ this follows trivially.\
  We can express $\pappby{c \psi}{B}$ as
  \[ 
    \pappby{c \psi}{B} = \bigcap_{N = 1}^\infty W_p(c,N)
  \]
  where 
  \begin{align*}
    W_p(c,N) 
      &:= \setofall \alpha \in \Zp \suchthat \exists n \in \N, \exists (a,n) \in \F{n}, \man{a}{n}\ge N, \pnorm{ n \alpha - a } \le c \cdot \psi( \man{a}{n}) \setend \\
      &= \bigcup_{ \substack{n \in \N \\ (a,n) \in \F{n} \\ \man{a}{n}\ge N } }  \setofall \alpha \in \Zp \suchthat \pnorm{ n \alpha - a } \le c \psi( \man{a}{n}) \setend.
  \end{align*}
  By definition, $W_p(c,N)$ and $\pappby{c \psi}{B}$ are measurable. Since $\Zp$ is a finite measure space, by the Downward Monotone Convergence Theorem, for any $\eps > 0$ there exists an $N(\eps)$ such that for all $N \ge N(\eps)$ we have \[ \m[p]{ W_p(c,N) \setminus \pappby{c \psi}{B} } \le \eps. \]

  Now, towards a contradiction, suppose that $\m[p]{ \pappby{\psi}{B}- \pappby{c \psi}{B} } > 0$. Then we may find an $N$ so that $\m[p]{ \pappby{\psi}{B}- W_p(c,N)} > 0$.
  Moreover, by a $p$-adic analogue to the Lebesgue Density Theorem (see \cite{lebesgue-density} for details), almost every element of the set $\pappby{\psi}{B} - W_p(c, N)$ is a density point. Let $\alpha' \in \pappby{\psi}{B} - W_p(c, N)$ be such a point.
  It then follows that
  \[
    \frac{ \m[p]{ B_p(\alpha', p^{-l} ) \cap W_p(c,N) } }{ \m[p]{B_p(\alpha', p^{-l} )} } \to 0 \qquad \text{as} \qquad l \to \infty,
  \]
  or, equivalently, $p^{-l} \to 0$.
  So, there exists $l_0 \in \N$ such that for all $l \ge l_0$
  \begin{equation}\label{eq:pos_mes_contradiction}
    \m[p]{ B_p(\alpha', p^{-l} ) \cap W_p(c,N) } < p^{- l - l'}
  \end{equation}
  where $l' \in \N$ is such that $p^{-l'} \le c < p^{-l' + 1} \le 1$.
  Now, since we have such an element $\alpha'$ there must exist $(a',n') \in \F{n'}$ with $\psi(\man{a'}{n'}) \le p^{l_0}$ and an $l_1 > l_0$ so that \[ 
    \pnorm{ n' \alpha' - a' } \le p^{-l_1} \le \psi( \man{a'}{n'} ),
  \]
  with $\man{a'}{n'} > N$.

  Consider the set \[ U_{(a',n')}(l_1) := \setofall \alpha \in \Zp \suchthat \pnorm{n' \alpha - a'} \le c p^{-l_1} \le c \psi(\man{a'}{n'}) \setend.
  \]
  Notice that this set can also be expressed as the ball about $\frac{a'}{n'}$ of radius $p^{-l'} p^{-l_1} = p^{-l_1 - l'} < c p^{-l_1}$ (because $p^{-l'} \le c < p^{-l' + 1}$).
  Also, note that since we have $U_{(a',n')}(l_1) \subseteq W_p(c,N)$ we also have \[ U_{(a',n')}(l_1) \cap B_p(\alpha', p^{-l_1}) \subseteq W_p(c,N) \cap B_p(\alpha', p^{-l_1}). \]
  Since both sets on the left-hand side are $p$-adic balls and they intersect (because $\pnorm{n' \alpha' - a'} \le p^{-l_1}$), the intersection is exactly the smaller ball; This is a particular property for balls in $\Qp$. Specifically, because $p^{-l_1 - l'} \le p^{-l_1}$ we have \[
    \m[p]{ U_{(a',n')}(l_1) \cap B_p(\alpha', p^{-l_1}) } = p^{-l_1 - l'}
  \]
  This immediately contradicts \eqref{eq:pos_mes_contradiction} since it implies \[
    p^{-l_1 - l'} 
      > \m[p]{ W_p(c,N) \cap B_p(\alpha', p^{-l_1}) }
      \ge \m[p]{ U_{(a',n')}(l_1) \cap B_p(\alpha', p^{-l_1}) } 
      = p^{-l_1 - l'}.
  \]

  Thus, $\m[p]{ \pappby{\psi}{B} \setminus \pappby{c \psi}{B}} = 0$; that is, the measure is independent of the scaling constant $c$.
\end{proof}

\subsubsection{$\m[p]{\pappby{\psi}{B}}$ is either $0$ or $1$}

With the above in our toolbelt, we can now prove Theorem \ref{thm:zero-one}.
\begin{proof}
If $\m[p]{\pappby{c \psi}{B}} = 0$, we are done.
Suppose instead that $\m[p]{\pappby{c \psi}{B}} > 0$. 
Given any $\eps > 0$, define the auxiliary set \[ W'(\eps) := \setofall \alpha = \alpha' + z \in \Zp \suchthat \alpha' \in \pappby{\psi}{B}, z \in \Z, \abs{z} \le c_1 \setend \] where $c_1 = c_1(\eps) > 0$ is chosen using Lemma \ref{lem:lutz1.1} to guarantee \[ \m[p]{ W'(\eps) } \ge 1 - \eps. \]

To conclude the argument, we show that $W'(\eps) \subseteq \pappby{d \psi}{B}$, for some $d$. Then, by our previous work, Proposition \ref{prop:ind_of_c}, we have \[
  \m[p]{\pappby{\psi}{B}} = \m[p]{\pappby{d \psi}{B}} \ge 1 - \eps,
\] for all $\eps > 0$, and thus $\m[p]{\pappby{\psi}{B}} = 1$, as needed.

Let $\alpha = \alpha' + z \in W'(\eps)$ be given; that is, $\alpha' \in \pappby{\psi}{B}$ and $\abs{z} \le c_1$.
By rearranging these quantities we have \[ n \alpha' - a = n (\alpha - z) - a = n \alpha -  (a + z n). \]
Then, by defining $a' = a + z n$ and $n' = n$ we have
\begin{equation}\label{eq:new_same_norm}
  \pnorm{ n \alpha' - a } = \pnorm{ n' \alpha - a' }.
\end{equation}
Moreover, we have
\begin{align*}
  \man{a'}{n'}
    &= \man{ a + zn }{ n } \\
    &\le \man{ \abs{a} + \abs{zn} }{ n } \\
    &\le \man{a}{n} + c_1 \man{a}{n} \\
    &= (1 + c_1) \man{a}{n}= k \man{a}{n}
\end{align*}
by taking $k = 1 + c_1$; that is, $\frac{\man{a'}{n'}}{k} \le \man{a}{n}$.

Now, since $\alpha' \in \pappby{\psi}{B}$ there exist infinitely many $n \in \N$ with a solution $(a,n) \in \F{n}$ to 
\begin{equation}\label{eq:new_upper_bound}
  \pnorm{ n \alpha' - a } \le \psi( \man{a}{n}).
\end{equation}
However, by our definition of $(a', n')$ and our assumption that $\psi(n)$ is decreasing we have
\begin{align*}
  \pnorm{ n' \alpha - a' } 
    \stackrel{\eqref{eq:new_same_norm}}{=} 
    \pnorm{ n \alpha' - a } 
    \stackrel{\eqref{eq:new_upper_bound}}{\le} \psi\parens*{ \man{a}{n} } 
    \le \psi\parens*{ \frac{\man{a'}{n'}}{k} }.
\end{align*}
Moreover, since we have a bijection between the pairs $(a,n)$ and $(a', n')$ there are infinitely many solutions to the above inequality. 
Note that 
\begin{equation}\label{eq:integer_translate}
  \frac{a'}{n'} = \frac{a + zn}{n} = \frac{a}{n} + z,
\end{equation}
is an integer translate of $\frac{a}{n}$, so \[
  \brackets*{ \frac{a'}{n'} } = \brackets*{ \frac{a}{n} } \in B.
\]

Recall our regularity condition on $\psi(n)$, by a change of variables it is equivalent to: for all $s < 1$, there exists $c_1 > 0$ so that for all sufficiently large $n$ we have $\psi(sn) < c_1 \psi(n)$.
With this reformulation, we conclude that there exists some $d = d(k) > 0$ so that we have infinitely many solutions $(a', n') \in \F{n'}$ to \[
  \pnorm{ n' \alpha - a'} \le \psi\parens*{ \frac{\man{a'}{n'}}{k} } \le d \cdot \psi( \man{a'}{n'} ).
\]
That is $\alpha \in \pappby{d \psi}{B}$, and so $W'(\eps) \subseteq \pappby{d \psi}{B}$.
Thus, \[
  \m[p]{\pappby{\psi}{B}} = \m[p]{\pappby{d \psi}{B}} \ge \m[p]{W'(\eps)} \ge 1 - \eps,
\] for all $\eps > 0$.
\end{proof}

This concludes the proof of Theorem \ref{thm:zero-one}; that is, the measure of the $\psi$-approximables can only be zero or one.

\subsection{$\pappby{\psi}{B}$ has positive measure}\label{sec:pos_mes}

Instead of showing $\pappby{\psi}{B}$ has positive measure, we show that a conveniently chosen proper subset has positive measure.

\begin{proposition}\label{prop:pos_mes}
  If $\sum\limits_{\substack{n=1\\p\nmid n}}^\infty n \psi(n) = \infty$, then the set $\pappby{\psi}{B}$ has positive measure.
\end{proposition}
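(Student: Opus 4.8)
The plan is to realise $\pappby{\psi}{B}$ as a $\limsup$ set and run a divergence (second--moment / quasi--independence) Borel--Cantelli argument in $\Zp$. For each $n$ with $p \nmid n$ put
\[
  A_n := \bigcup_{(a,n)\in\F{n}} \setofall \alpha \in \Zp \suchthat \pnorm{n\alpha - a} \le \psi(n) \setend ;
\]
after choosing the representatives $0 \le a < n$ (so that the height $\man{a}{n}$ equals $n$) and using $\pnorm{n} = 1$, and noting that the discreteness of the $p$-adic valuation lets us replace $\psi(n)$ by the largest power of $p$ strictly below it without changing anything, one has $\pappby{\psi}{B} = \limsup_{n,\, p\nmid n} A_n$ as sets. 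I would first discard the trivial case: if $\psi(n) > 1$ for all $n$ then $\psi$ is bounded below by $1$, $A_n = \Zp$ for every large $n$, and there is nothing to prove; so assume $\psi(n) < 1$ eventually.

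Next I would estimate $\m[p]{A_n}$. Multiplication by $n$ is a measure--preserving bijection of $\Zp$, and each set $\setofall \alpha \suchthat \pnorm{n\alpha - a} \le \psi(n)\setend$ is the preimage under it of a $p$-adic ball of measure $\asymp_p \psi(n)$. When $\psi(n) < 1/n$ these balls are pairwise disjoint, since distinct $a, a' \in [0,n)$ have $\pnorm{a - a'} \ge 1/n > \psi(n)$; counting the Farey fractions $a/n$ with $(a,n) = 1$ whose class lies in the arc $B$ --- i.e. the standard equidistribution estimate $\# \F{n} = \abs{B}\,\phi(n) + O(2^{\omega(n)})$ --- then gives $\m[p]{A_n} \asymp_B \phi(n)\psi(n)$, while in the range $1/n \le \psi(n) < 1$ one gets $\m[p]{A_n} \gg_B 1$, which only helps. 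Combined with the elementary fact (provable by partial summation from $\sum_{n\le N}\phi(n) \asymp N^2$ and the monotonicity of $\psi$) that $\sum_{p\nmid n} n\psi(n) = \infty$ forces $\sum_{p\nmid n}\phi(n)\psi(n) = \infty$, together with the $p$-free reduction Lemma identifying the hypothesis with $\sum n\psi(n) = \infty$, this yields $\sum_{n\le N}\m[p]{A_n} \to \infty$.

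The core of the proof is the quasi--independence bound
\[
  \sum_{\substack{m,n\le N \\ p\nmid mn}} \m[p]{A_m \cap A_n} \le C\Big(\sum_{n\le N}\m[p]{A_n}\Big)^2 + C\sum_{n\le N}\m[p]{A_n}
\]
for a constant $C = C(B,p)$. For $m \ne n$, an $\alpha \in A_m \cap A_n$ with witnesses $a/m, b/n$ forces the congruence $an \equiv bm$ to a $p$-power modulus $\asymp \max\set{\psi(m),\psi(n)}$, and each admissible pair $(a,b)$ then contributes a ball of measure $\asymp \min\set{\psi(m),\psi(n)}$; hence $\m[p]{A_m\cap A_n}$ is governed by the number of Farey pairs $(a/m, b/n) \in B\times B$ solving that congruence. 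Estimating this count --- organising it by $\gcd(m,n)$ and using effective equidistribution of reduced residues in the arc $B$ within each residue class --- is precisely the Duffin--Schaeffer--type overlap estimate, and it is the step I expect to be the main obstacle, because the coprimality conditions produce the familiar $\tfrac{mn}{\phi(m)\phi(n)}$-type losses that have to be absorbed into the constant. One possible simplification is to restrict the $\limsup$ to $n$ drawn from a set on which $\gcd$-overlaps are automatically controlled (for instance $n$ prime and $\ne p$, or $n$ squarefree), provided one checks that the relevant subseries of $\sum n\psi(n)$ still diverges for monotone $\psi$; I would attempt that reduction first.

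Finally, granted the quasi--independence bound, the divergence (Chung--Erd\H{o}s) form of the Borel--Cantelli Lemma gives
\[
  \m[p]{\pappby{\psi}{B}} = \m[p]{\limsup_n A_n} \ge \limsup_{N\to\infty} \frac{\big(\sum_{n\le N}\m[p]{A_n}\big)^2}{\sum_{m,n\le N}\m[p]{A_m\cap A_n}} \ge \frac{1}{C} > 0,
\]
which is exactly the assertion of the Proposition; the Zero--One Law of Theorem \ref{thm:zero-one} then upgrades this to full measure.
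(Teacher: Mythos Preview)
Your proposal is essentially the paper's own argument: a second--moment (Paley--Zygmund / Chung--Erd\H{o}s) estimate, with the first moment controlled from below via equidistribution of Farey fractions in the arc $B$ and the second moment controlled from above by an overlap count. The paper works with the counting function $\Delta_N(\alpha)=\sum_{(a,n)}\dan(\alpha)$ rather than with the unions $A_n$, but this is only a cosmetic difference.

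The one point worth flagging is your expectation that the overlap estimate is ``the main obstacle'' and will involve Duffin--Schaeffer--style $\tfrac{mn}{\phi(m)\phi(n)}$ losses or a retreat to prime denominators. In the $p$-adic setting this difficulty simply does not arise: when bounding $\m[p]{A_m\cap A_n}$ from above one may drop the constraints $(a,n)=(b,m)=1$ and $a/n,\,b/m\in B$ entirely, and then (Lutz's Lemma~4.14) the number of pairs $(a,b)\in[1,n]\times[1,m]$ with $\pnorm{an-bm}<\max\{\psi^*(n),\psi^*(m)\}$ is at most $4mn\max\{\psi^*(n),\psi^*(m)\}$, a pure congruence count with no coprimality involved. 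Multiplying by the contribution $\min\{\psi^*(n),\psi^*(m)\}$ of each admissible pair gives $\m[p]{A_m\cap A_n}\le 4mn\,\psi(m)\psi(n)$ directly, so the quasi--independence bound is immediate and no $\gcd$ organisation is needed.
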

\begin{proof}
  Following the style of Notation \ref{not:F_n^B}, we denote by $B^{[0,1]}$ the collection of coset representatives of $B$ that lie in the interval $[0,1]$.
  Similarly, we denote by $\F[B^{[0,1]}]{n}$ the elements of $\F[B]{n}$ where $\frac{a}{n}$ is restricted to $[0,1]$.
  With this notation set, it suffices to show \[
    \m[p]{\pappby{\psi}{B^{[0,1]}}} > 0
  \] since
  \[ \pappby{\psi}{B^{[0,1]}} \subseteq \pappby{\psi}{B}. \] 
  
  For our  convenience, we will write $\Psi(N) = \sum\limits_{n = 1}^N n \psi(n)$ and define $\Delta_N := \Delta_N(\alpha)$ to be the number of solutions to $\pnorm{n \alpha - a} < \psi(\man{a}{n})$ where $n \le N$ and $(a,n) \in \F[B^{[0,1]}]{n}$. 
  Also, we define \[ \dan(\alpha) := \chi_{\set{ \alpha' \in \Zp \suchthat \pnorm{n \alpha' - a} < \psi(\man{a}{n}) } } (\alpha) , \] as the characteristic function for the ball $B_p\parens*{\frac{a}{n}, \psi(\man{a}{n})}$.
  Then, we can rewrite the number of solutions as \[ \Delta_N = \sum_{\substack{n = 1 \\ (a,n) \in \F[B^{[0,1]}]{n}}}^N \dan. \]
  Lastly, we define the following two quantities:
  \[ M_1 = M_1(N) := \int_{\Zp} \Delta_N(\alpha) \da, \]
  and
  \[ M_2^2 = M_2^2(N) := \int_{\Zp} \Delta_N^2(\alpha) \da. \]

  With this setup, the proof is now reduced, in essence, to satisfying the conditions of the Paley--Zygmund Lemma. 
  The statement and full proof of the Lemma can be found on page 122 in \cite{cassels}.
  \begin{lemma}[Paley--Zygmund Lemma]\label{lem:paley_zygmund}
    Suppose $f : \Qp \to \R$ is a non-negative, integrable function. Define
    \[
    M_1 := \int_{\Zp} f(\alpha) \da \qquad \text{and} \qquad M_2^2 := \int_{\Zp} f^2(\alpha) \da.
    \]
    If there exists $c_1 > 0$ such that $c_1 M_2 \le M_1$ then for all $0 \le c_2 \le c_1$, the set of points for which $f(\alpha) \ge c_2 M_2$ has measure at least $(c_1 - c_2)^2$; that is, \[ \m[p]{ \setofall \alpha \in \Zp \suchthat f(\alpha) \ge c_2 M_2 \setend} \ge (c_1 - c_2)^2. \]
  \end{lemma}
  
  In particular, we show that there is a constant $N_0 = N_0(B)$, depending solely on $B$, so that the Paley--Zygmund Lemma may be applied to all $M_1(N)$, $M_2(N)$ such that $N \ge N_0(B)$; This constant is defined in Lemma \ref{lem:N0}.
  With that, we have that there exists a $c_1$ (depending only on $B$) such that $c_1 M_2(N) \le M_1(N)$, for all $N \ge N_0(B)$. To conclude that $\pappby{\psi}{B}$ has positive measure, take any $c_2 \in (0,c_1)$, by the Paley--Zygmund Lemma, there is a positive measure set in $\Zp$ so that every $\alpha$ in that set satisfies \[ \Delta_N(\alpha) \ge c_2 M_2(N),\] for all $N \ge N_0(B)$.
  Moreover, by Cauchy's Inequality, we have that $M_2(N) \ge M_1(N)$ for all $N \ge N_0(B)$.
  Lastly, we show in the next section that $M_1(N) \ge \Psi(N)$, again, for all $N \ge N_0(B)$.
  All together, we have the chain of inequalities \[
    \Delta_N(\alpha) \ge c_2 M_2(N) \ge c_2 M_1(N) \ge c_2\Psi(N),
  \]
  for a positive measure set of $\alpha \in \Zp$.
  Since we are assuming that $\Psi(N) = \sum\limits_{n=1}^\infty \psi(n)$ diverges, we have $\Delta_N(\alpha) \to \infty$ as $N \to \infty$, on this positive measure set of $\alpha \in \Zp$. Thus, each $\alpha$ in this positive measure set has an infinite number of solutions of the desired form. 
  
  We now produce the respective lower and upper bounds.
  
  \subsubsection{Bounding $M_1$}
  
  Consider the following chain of equalities
  \begin{align*}
    M_1
      &= \int_{\Zp} \Delta_N \da \\
      &= \int_{\Zp} \sum_{\substack{n = 1 \\ (a,n) \in \F[B^{[0,1]}]{n}}}^N \dan \da \\
      &= \sum_{\substack{n = 1 \\ (a,n) \in \F[B^{[0,1]}]{n}}}^N  \int_{\Zp} \dan \da \\
      &= \psum_{n = 1}^N \sum_{ (a,n) \in \F[B^{[0,1]}]{n}} \int_{\Zp} \dan \da,
      \intertext{where $\psum$ is the sum restricted to values not divisible by $p$,}
      &= \psum_{n = 1}^N \sum_{ (a,n) \in \F[B^{[0,1]}]{n}} \psi^*(n)
      \intertext{where $t^*$ is the unique power of $p$ such that $\frac{t}{p} < t^* \le t$,}
      &= \psum_{n = 1}^N \bphi(n) \psi^*(n), \\
  \end{align*}
  and $\bphi[B](n) = \#\setofall a \in \N \suchthat (a,n) = 1, \frac{a}{n} \in [0,1] \cap B \setend$. 
  That is, $\bphi(n)$ is a modified version of the standard Euler totient function; specifically $\bphi[{[0,1]}](n)$ is \emph{exactly} the standard Euler totient function, $\varphi(n)$.
  With this notation, the last equality follows from the observation that by definition we have $\abs{ \F[B^{[0,1]}]{n} } = \bphi(n)$, when $p \nmid n$; in sum, the above calculation yields
  \begin{equation}\label{eq:M_1_equality}
    M_1 = \psum_{n = 1}^N \bphi(n) \psi^*(n).
  \end{equation}
  
  Following a similar result for the classical Euler totient function, we have the generalization
  \begin{lemma}\label{lem:N0}
    There exists $N_0(B) \in \N$ and a constant $C = C(B)$, such that for all $N \ge N_0(B)$
    \[ \psum_{n=1}^N \bphi(n) \ge C N^2 \]
    for all $N \ge N_0(B)$.
  \end{lemma}
  \begin{proof}
    By equidistribution of the Farey sequence, see \cite{niederreiter}, we have $\frac{ \bphi(n) }{ \varphi(n) } \to \lambda(B)$ as $n \to \infty$; where $\lambda(-)$ is the Lebesgue measure on $\R$.
    We can choose $N_0(B)$ sufficiently large so that \[
      \bphi(n) \ge \frac{1}{2} \lambda(B) \varphi(n),
    \] for all $n \ge N_0(B)$.
    Given $N_0(B)$, there is some $C_1$ so that \[
      \psum_{n=1}^{N_0 - 1} \bphi(n) \ge C_1 (N_0 - 1)^2.
    \]
    Note that \[
      \psum_{n=1}^{N_0 - 1} \varphi(n) \le \sum_{n=1}^{N_0 - 1} \varphi(n) \le (N_0 - 1)^2.
    \] 
    By the above we have
    \begin{align*}
      \psum_{n=1}^N \bphi(n)
        &= \psum_{n=1}^{N_0 - 1} \bphi(n) + \psum_{N_0}^N \bphi(n) \\
        &\ge C_1 (N_0 - 1)^2 + \frac{1}{2} \lambda(B) \psum_{N_0}^N \varphi(n) \\
        &\ge C_1 \psum_{n=1}^{N_0 - 1} \varphi(n) + \frac{1}{2} \lambda(B) \psum_{N_0}^N \varphi(n) \\
        &\ge C_2 \psum_{n=1}^N \varphi(n)
    \end{align*}
    where $C_2 = \min\set{C_1, \frac{1}{2} \lambda(B)}$.
    Applying Lutz's Lemma 4.16 in \cite{lutz} there is some $C_3 > 0$ so that for all $N$ we have \[
      \psum_{n=1}^N \varphi(n) \ge C_3 N^2.
    \]
    
    Taking $C = C_2 \cdot C_3$ and any $N \ge N_0$, we have \[
      \psum_{n=1}^N \bphi(n) \ge C N^2,
    \] as needed.
  \end{proof}
  
  We now recall a result of Lutz's which relates $p$-free sums to their full counterparts.  
  \begin{corollary}[{\cite[Corollary to Lutz's Lemma 4.17]{lutz}}]\label{cor:lutz4.17}
    Suppose $f, g : \Zpos \to \Rpos$ are such that $f$ is decreasing and there exists a constant $C > 0$ and an integer $\rho > 0$ such that for all $N \ge N_0(B)$,
    \[
    \psum_{n = 1}^N g(n) > C N^{\rho + 1},
    \]
    again $\psum$ is still the sum restricted to values not divisble by $p$.
    Then, for all $N \ge N_0(B)$ we have
    \[ 
      \psum_{n=1}^N f(n) g(n) > C \sum_{n=1}^N n^\rho f(n).
    \]
  \end{corollary}
  
  \begin{remark}
    Note there is a minor difference between our statement of the Corollary and the original statement in \cite{lutz}. In the original, one requires the inequality to hold for all $N \ge 1$ but one may just as well loosen that restriction to merely holding for all $N \ge N_0(B)$.
    The proof of this generalization follows immediately on repeating the argument in \cite{lutz}.
  \end{remark}
  
  From these two lemmas, taking $f = \psi(n)$, $g(n) = \bphi(n)$, and $\rho = 1$, we get a constant $C > 0$ so that for all $N \ge N_0(B)$, we have \[ \psum_{n=1}^N \bphi(n) \psi(n) \ge C \sum_{n=1}^N n \psi(n) = C  \Psi(N). \]
  Combining this with \eqref{eq:M_1_equality} we get
  \begin{equation}\label{eq:M_1_bound}
     M_1 
      = \psum_{n = 1}^N \bphi(n) \psi^*(n) 
      \ge \frac{1}{p} C \sum_{n = 1}^N n \psi(n) 
      \ge \parens*{ \frac{C}{p} } \Psi(N).
  \end{equation}
  
  \subsubsection{Bounding $M_2^2$}
  
  To bound the quantity $M_2^2$ we first decompose it into more manageable components:
  \begin{align*}
    M_2^2
      &= \int_{\Zp} \Delta_N^2 \da \\
      &= \int_{\Zp} \parens*{ \sum_{\substack{n = 1 \\ (a,n) \in \F[B^{[0,1]}]{n}}}^N \dan }^2 \da \\
      \intertext{since $\dan$ is a characteristic function we have}
      &= \int_{\Zp} \sum_{n,m = 1}^N \sum_{ \substack{ (a,n) \in \F[B^{[0,1]}]{n} \\ (b,m) \in \F[B^{[0,1]}]{m} } } \dan \cdot \dan[b][m] \da \\
      &= \sum_{n,m = 1}^N \sum_{ \substack{ (a,n) \in \F[B^{[0,1]}]{n} \\ (b,m) \in \F[B^{[0,1]}]{m} } } \int_{\Zp} \dan \dan[b][m] \da \\
      &= \underbrace{ \sum_{n=1}^N \sum_{ (a,n) \in \F[B^{[0,1]}]{n} } \int_{\Zp} \dan \da }_{(a,n) = (b,m)}
        + \underbrace{ \sum_{n,m = 1}^N \sum_{ \substack{ (a,n) \in \F[B^{[0,1]}]{n} \\ (b,m) \in \F[B^{[0,1]}]{m} \\ (a,n) \ne (b,m)} } \int_{\Zp} \dan \dan[b][m] \da}_{(a,n) \ne (b,m)}.
  \end{align*}
  In the case of equality, we have as a natural upperbound
  \begin{equation}\label{eq:a,n=b,m}
    \sum_{n=1}^N \sum_{ (a,n) \in \F[B^{[0,1]}]{n} } \int_{\Zp} \dan \da \le \sum_{n=1}^N n \psi(n) = \Psi(N).
  \end{equation}
  
  Recall that in $\Qp$ exactly one of the following occurs for any pair of balls: the balls are disjoint or one of the balls is contained in the other.
  Thus, we have
  \begin{equation}\label{eq:meas padic ball is min}
    \int_{\Zp} \dan \dan[b][m] \da 
      = \begin{cases}
        \min\set{\psi^*(n), \psi^*(m)} & \pnorm{\frac{a}{n} - \frac{b}{m} } < \max\set{\psi^*(n), \psi^*(m) } \\
        0 & \pnorm{\frac{a}{n} - \frac{b}{m} } \ge \max\set{\psi^*(n), \psi^*(m) }.
    \end{cases}
  \end{equation}
  That is, the measure of the ball is either $0$ or the measure of the smaller ball depending on whether the balls intersect or not. 
  One must then ask how many such intersections exist; that is, given the conditions on $a,b,n,m$ above, how often is $\pnorm{\frac{a}{n} - \frac{b}{m}} < \max\set{\psi^*(n), \psi^*(m)}$? The following two lemmas give us upper bounds.
  \begin{lemma}{\cite[Lemma 4.14]{lutz}}\label{lem:lutz4.14}
    If $m \ne n$ and $p \nmid m,n$ then the number of $(a,b) \in \N^2$ such that \[ 
      \pnorm{\frac{a}{n} - \frac{b}{m}} < \max\set{\psi^*(n), \psi^*(m)}
    \]
    is bounded above by $4nm \max\set{\psi^*(n), \psi^*(m)}$.
  \end{lemma}
  \begin{lemma}\label{lem:lutz4.14_missing}
    If $m = n$, $a \ne b$ and $p \nmid m,n$ then the number of $(a,b) \in \N^2$ such that \[ 
      \pnorm{\frac{a}{n} - \frac{b}{m}} < \max\set{\psi^*(n), \psi^*(m)}
    \]
    is bounded above by $4nm \max\set{\psi^*(n), \psi^*(m)}$.
  \end{lemma}
  \begin{proof}
    Since $p \nmid m,n$ we have 
    \begin{align}
      \pnorm{ \frac{a}{n} - \frac{b}{m} }
        &\le \pnorm{ \frac{ am - bn }{mn} }
        = \pnorm{ am - bn },
    \end{align}
    and since $m = n$ is an integer we have \[
      \pnorm{ am - bn } = \pnorm{ am - bm } = \pnorm{ m (a - b) } = \pnorm{ a - b }.
    \]
    We show that the number of solutions $(a,b)$ to 
    \begin{equation}\label{eq:lutz4.14_inequality}
      \pnorm{a - b} < \psi^*(n)
    \end{equation}
    is bounded above by $n^2 \psi^*(n)$ and since $n=m$ this completes the proof.
    
    Recall that $\psi^*(n)$ is the unique power of $p$ so that $\frac{\psi(n)}{p} < \psi^*(n) \le \psi(n)$, say $\psi^*(n) = p^{-t}$.
    Since $a$ and $b$ are integers the inequality in \eqref{eq:lutz4.14_inequality} can be rephrased as \[
      a - b = p^t c
    \] for some integer $c$.
    There are $n$ choices for $a$ and thus for a fixed value of $a$ there at most $\frac{n}{p^t}$ choices for $b$ satisfying this condition.
    That is, there no more than \[
      n \cdot \frac{n}{p^{-t}} = n^2 \psi^*(n)
    \] pairs $(a,b)$ satisfying \eqref{eq:lutz4.14_inequality}, as needed.
  \end{proof}
  
  On combining these with \eqref{eq:meas padic ball is min} and our bound in \eqref{eq:a,n=b,m} we have
  \begin{align*}
    M_2^2 
      &= \sum_{n=1}^N \sum_{ (a,n) \in \F[B^{[0,1]}]{n} } \int_{\Zp} \dan \da
        + \sum_{n,m = 1}^N \sum_{ \substack{ (a,n) \in \F[B^{[0,1]}]{n} \\ (b,m) \in \F[B^{[0,1]}]{m} \\ (a,n) \ne (b,m)} } \int_{\Zp} \dan \dan[b][m] \da \\
      &\le \Psi(N) + \sum_{n,m=1}^N 4nm \max\set{\psi^*(n), \psi^*(m)} \cdot \min\set{\psi^*(n), \psi^*(m)} \\
      &= \Psi(N) + 4 \sum_{n,m=1}^N nm \psi^*(n) \psi^*(m) \\
      &\le \Psi(N) + 4\sum_{n,m=1}^N nm \psi(n) \psi(m) \\
      &\le \Psi(N) + 4 \Psi(N)^2 \\
      &= \parens*{ \frac{1}{\Psi(N)} + 4 } \Psi(N)^2
  \end{align*}
  More succinctly, we have \[ M_2 \le 2 \Psi(N). \]
  Lastly, observe that this implies \[ M_1 \ge \frac{C}{p} \Psi(N) = \frac{C}{2p} \cdot 2 \Psi(N) \ge \frac{C}{2p} \cdot M_2, \] which concludes the proof by taking $c_1 = \frac{C}{2p}$ in the Paley--Zygmund Lemma.
\end{proof}

\section{Proof of Theorem \ref{thm:p_to_p}, Case 3}\label{sec:p_to_p}

The proof of Case 3 follows the same argument as Case 2, with some minor modifications.
Because of the similarities, we have extracted the changes and include them here, with their proofs, to maintain the flow of the argument in Case 2 and allow the reader to substitute in the required pieces as needed.

As in the previous cases, the convergence case follows from a standard Borel--Cantelli covering argument.
What remains are modifications to Notation \ref{not:F_n^B}, the (already) modified Euler totient function, Lemma \ref{lem:lutz1.1}, the proof of Theorem \ref{thm:zero-one}, and Lemma \ref{lem:N0}.

We begin with the modification to Notation \ref{not:F_n^B}:
\begin{notation}[Modified Notation \ref{not:F_n^B}]\label{not:F_n^b_modified}
  Given a set $B$, define the collection \[ \F[B_{p_1}]{n} := \setofall (a,n) \in \N^2 \suchthat (a,n) = 1, \frac{a}{n} \in B_{p_1} \setend \]
  to be those pairs where the second coordinate is the fixed number $n$.
  
  With this notation we can refer to the $\psi$-approximables by $B_{p_1}$ as \[ \pappby[p_2]{\psi}{B_{p_1}} := \setofall \alpha \in \Zp[p_2] \suchthat \pnorm[p_2]{ n \alpha - a } < \psi(\man{a}{n}), \text{ for i.m. } n \in \N \text{ and } (a,n) \in \F[B_{p_1}]{n} \setend. \]
\end{notation}

With this we re-modify the Euler totient function as \[
  \bphi[B_{p_1}] = \# \setofall a \in \N \suchthat (a,n) = 1, \frac{a}{n} \in [0,1] \cap B_{p_1} \setend.
\]
It then follows that \[
  \abs{ \F[B_{p_1}^{[0,1]}]{n} } = \bphi[B_{p_1}],
\]
where, as before, $B_{p_1}^{[0,1]} = B_{p_1} \cap [0,1]$.

We also require the following modification of Lutz's Lemma 1.1 to account for the change in permissible translations.

\begin{lemma}[Modified Lemma \ref{lem:lutz1.1}]\label{lem:lutz1.1_modified}
  Let $p_1, p_2$ be distinct primes and $k \in N$ be any positive integer. 
  Suppose $A \subseteq \Zp[p_2]$ is a positive measure set. Given any $\eps > 0$, there exists $c = c(\eps) > 0$ such that the set \[
    \setofall \alpha = \alpha' + z \suchthat \alpha' \in A, z \in p_1^k \Z, \abs{z} < c
    \setend
  \]
  has measure at least $1 - \eps$.
\end{lemma}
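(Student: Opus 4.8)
The plan is to mimic the proof of Lutz's Lemma 1.1 (Lemma \ref{lem:lutz1.1}), but replacing the group of integer translates $\Z$ by the subgroup $p_1^k\Z$, which is still a dense subgroup of $\Zp[p_2]$ since $p_1 \ne p_2$ (so $p_1$ is a unit in $\Zp[p_2]$, hence $p_1^k\Z$ is just a ``rescaled copy'' of $\Z$ inside $\Zp[p_2]$, and $\Z$ is dense). The key point making this work is that translation by any fixed element of $\Zp[p_2]$ is a measure-preserving bijection of $\Zp[p_2]$, so each translate $A + z$ of the positive-measure set $A$ has the same measure $\m[p_2]{A} > 0$.

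First I would reduce to the statement that finitely many translates of $A$ by elements of $p_1^k\Z$ already cover a set of measure at least $1-\eps$. By regularity of the Haar measure, fix a compact $K \subseteq \Zp[p_2]$ with $\m[p_2]{K} \ge 1 - \eps/2$; it then suffices to cover $K$ up to measure $\eps/2$ by finitely many translates $A + z$, $z \in p_1^k\Z$. Next I would observe that the translates $\{A + z : z \in p_1^k\Z\}$ cover all of $\Zp[p_2]$: this is exactly the density of $p_1^k\Z$ in $\Zp[p_2]$ combined with the fact that $A$, having positive measure, contains (by the $p$-adic Lebesgue Density Theorem, or just directly) a full ball $B_{p_2}(\alpha', p_2^{-\ell})$; since the cosets of $p_2^{\ell}\Zp[p_2]$ are all hit by elements of $p_1^k\Z$ (density), every point of $\Zp[p_2]$ lies in some translate $B_{p_2}(\alpha', p_2^{-\ell}) + z \subseteq A + z$. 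In particular the open cover $\{\,(A + z)^{\circ}\text{-neighborhoods}\,\}$ — more carefully, the cover of $K$ by the sets $\mathrm{int}(B_{p_2}(\alpha',p_2^{-\ell}) + z)$, which are open since $p$-adic balls are clopen — admits a finite subcover by compactness of $K$. Let $z_1, \dots, z_r \in p_1^k\Z$ be the finitely many translations occurring, and set $c = c(\eps) = \max_i |z_i| + 1$.

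Then the set $\{\alpha = \alpha' + z : \alpha' \in A,\ z \in p_1^k\Z,\ |z| < c\}$ contains $\bigcup_{i=1}^r (A + z_i) \supseteq K$, so it has measure at least $\m[p_2]{K} \ge 1 - \eps$, as claimed. (One should double-check that the archimedean bound $|z| < c$ on integers $z = p_1^k m$ is the intended normalization — it matches the statement of Lemma \ref{lem:lutz1.1} verbatim — and that all $z_i$ are genuine integers, which they are, being elements of $p_1^k\Z$.)

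The main obstacle, such as it is, is purely bookkeeping: making sure the covering argument uses the \emph{correct} topology and the \emph{correct} notion of ``size'' of translates. The subtlety is that $|z|$ in the statement is the ordinary archimedean absolute value of the integer $z$, not its $p_2$-adic norm, so the finiteness of the subcover (a topological statement in $\Zp[p_2]$) must be converted into a bound on the archimedean size of finitely many explicit integers — this is automatic once the subcover is finite, but it is the one place where the two completions' metrics interact. Everything else is a verbatim transcription of Lutz's original argument with $\Z$ replaced by $p_1^k\Z$, using only that $p_1$ is invertible in $\Zp[p_2]$ and that $\Z$ (hence $p_1^k\Z$) is dense in $\Zp[p_2]$.
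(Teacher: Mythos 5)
There is a genuine gap in your proposal: you claim that a positive-measure set $A \subseteq \Zp[p_2]$ contains a full $p_2$-adic ball $B_{p_2}(\alpha', p_2^{-\ell})$, and this is simply false. For example, enumerate the rationals in $\Zp[p_2]$ as $q_1, q_2, \dots$, remove from $\Zp[p_2]$ a ball of radius $p_2^{-n-2}$ around each $q_n$, and call the result $A$. Then $\m[p_2]{A} > 0$, but every ball in $\Zp[p_2]$ contains some $q_n$ and hence meets $A^c$, so $A$ contains no ball whatsoever. The Lebesgue Density Theorem only gives a point $\alpha$ and a radius $p_2^{-l}$ for which $B_{p_2}(\alpha, p_2^{-l})$ is \emph{mostly} (say $(1-\eps)$-fraction) filled by $A$; it never gives full containment. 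Your compactness argument hinges on open sets $B_{p_2}(\alpha', p_2^{-\ell}) + z$ being subsets of $A + z$, so once this containment fails the argument collapses: the translates $A + z$ need not be open (nor contain open sets), and you cannot extract a finite subcover of $K$.

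The paper's proof sidesteps this entirely. It takes a density point $\alpha$, lets $U_\alpha = B(\alpha, p_2^{-l}) \cap A^c$ be the small ``bad part'' of the ball (with $\m[p_2]{U_\alpha} \le \eps\, p_2^{-l}$), and then makes the key observation that the \emph{specific} finite list of translates $z = 0,\, p_1^k,\, 2p_1^k,\, \dots,\, (p_2^l - 1)p_1^k$ already produces balls $B(\alpha + z, p_2^{-l})$ that \emph{partition} $\Zp[p_2]$ — because $p_1$ is a $p_2$-adic unit, multiplying by $p_1^k$ does not change the $p_2$-adic distance between centers, so disjointness is preserved. The complement of $\bigcup_z (A + z)$ is then contained in $\bigcup_z (U_\alpha + z)$, which has measure at most $p_2^l \cdot \eps\, p_2^{-l} = \eps$, and $c = p_1^k p_2^l$ is read off explicitly. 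No compactness or open cover is needed. You correctly identified the unit-ness of $p_1$ in $\Zp[p_2]$ as the essential ingredient, but the mechanism by which it enters is this partition observation, not density of $p_1^k\Z$.
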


\begin{proof}
  Let $A \subseteq \Zp[p_2]$ be a positive measure set and $\eps > 0$ be given.
  By a $p_2$-adic analogue to the Lebesgue Density Theorem (again, see \cite{lebesgue-density} for details) almost every element of $A$ is a density point.
  Let $\alpha \in A$ be such an element.
  It then follows that \[
    \frac{\m[p_2]{ B(\alpha, p_2^{-l}) \cap A^c}}{\m[p_2]{ B(\alpha, p_2^{-l}) } } \to 0
  \]
  as $l \to \infty$ (i.e. as the radius $p_2^{-l}$ tends to zero) where $A^c$ is the complement of the set $A$.
  Letting $U_\alpha = U_{\alpha, l} = B(\alpha, p_2^{-l}) \cap A^c$, we get that there exists some integer $l_0 = l_0(\eps)$ such that \[
    \frac{\m[p_2]{ U_\alpha } }{\m[p_2]{ B(\alpha, p_2^{-l}) } } \le \eps
  \]
  for $l \ge l_0$ and so
  \begin{equation}\label{eq:density_point_upperbound}
    \m[p_2]{ U_\alpha } \le \eps \m[p_2]{ B(\alpha, p_2^{-l}) }.
  \end{equation}
  Fix such an $l \ge l_0$.
  Recall that in $\Zp[p_2]$, there are exactly $p_2^l$ disjoint balls of radius $p_2^{-l}$ which partition $\Zp[p_2]$.
  So, the collection of translates $\set{B(\alpha + z, p_2^{-l})}$, for $z = 0, 1, 2, \cdots, p_2^l - 1$, form a partition of $\Zp[p_2]$.
  Notice that the collection of translates $\set{B(\alpha + z, p_1^k p_2^{-l})}$, for $z = 0, 1 p_1^k, 2 p_1^k , \cdots, (p_2^l - 1) p_1^k$ also form a partition of $\Zp[p_2]$.
  
  To see this, take any 2 centers of the original balls $0 \le i \le j < c = p_2^l$. The distance between the new centers $i p_1^k$ and $j p_1^k$ has not changed: \[
    \pnorm[p_2]{i p_1^k - j p_1^k}
      = \pnorm[p_2]{ (i - j) \cdot p_1^k}
      = \pnorm[p_2]{i - j}.
  \]
  Since this is true for every pair of centers, and (in $\Zp[p_2]$) any element in a ball is also the center of that ball, all of the balls must still be disjoint.
  
  Thus, the set of elements in $\Zp[p_2]$ that are not contained in the set of translates \[
    \bigcup_{\substack{z = 0 \\ p_1^k \mid z}}^{p_1^k p_2^l} A + z
  \]
  is exactly the set
  \[
    \bigcup_{\substack{z = 0 \\ p_1^k \mid z}}^{p_1^k p_2^l} U_{\alpha + z}.
  \]
  
  However, the measure of this set (i.e. the set of points not contained in the translates of $A$) is
  \begin{align*}
    \m[p_2]{\bigcup_{\substack{z = 0 \\ p_1^k \mid z}}^{p_1^k \cdot p_2^l } U_{\alpha + z}} \\
      &= \m[p_2]{\bigcup_{z = 0}^{p_2^l} U_{\alpha + p_1^k z}} \\
      &\le \sum_{0 \le z < p_2^l} \m[p_2]{U_{\alpha + p_1^k z}} \\
      &= \sum_{0 \le z < p^l} \m[p_2]{U_{\alpha}}, \\
      \intertext{since translations are measure-preserving,}
      &= p_2^l \cdot \m[p_2]{U_{\alpha}} \\
      &\le p_2^l \cdot \eps \m[p_2]{ B(\alpha, p_2^{-l}) } &\text{by \eqref{eq:density_point_upperbound}} \\
      &= p_2^l \cdot \eps \cdot p_2^{-l} \\
      &= \eps.
  \end{align*}
  Thus, by taking $c = p_1^k \cdot p_2^l$, we conclude that \[
    \m[p_2]{ \bigcup_{0 \le z < c} A + z } \ge 1 - \eps,
  \]
  as needed.
\end{proof}

Given this modified Lemma and the new notation outlined above, the proof of Theorem \ref{thm:zero-one} follows exactly as before with the observation that the translated rationals in \eqref{eq:integer_translate} fall inside $B_{p_1}$ as before, since we are translating by an element of $p_1^k \Z$.

Lastly, we include the modification of Lemma \ref{lem:N0}.

\begin{lemma}[Modified Lemma \ref{lem:N0}]
  There exists $N_0(B_{p_1}) \in \N$ and a constant $C = C(B_{p_1})$, such that for all $N \ge N_0(B_{p_1})$
  \[ \psum_{n=1}^N \bphi[B_{p_1}](n) \ge C N^2 \]
  for all $N \ge N_0(B_{p_1})$.
\end{lemma}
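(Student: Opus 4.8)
The plan is to reuse the proof of Lemma \ref{lem:N0} essentially verbatim; the only genuinely new ingredient is the $p_1$-adic replacement of the Farey equidistribution input $\bphi[B](n)/\varphi(n)\to\lambda(B)$. First I would establish
\[
  \frac{\bphi[B_{p_1}](n)}{\varphi(n)} \longrightarrow \m[p_1]{B_{p_1}}
  \qquad \text{as } n \to \infty \text{ through } n \text{ coprime to } p_1,
\]
and record that $\bphi[B_{p_1}](n) = 0$ whenever $p_1 \mid n$ (a reduced fraction $\tfrac{a}{n}$ with $p_1 \mid n$ has $p_1$-adic norm greater than $1$, hence cannot lie in $B_{p_1} \subseteq \Zp[p_1]$). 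The feature that makes the $p_1$-adic case \emph{easier} than the real one is that membership in a $p_1$-adic ball is a single congruence condition rather than an interval condition.

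To exploit this I would write $B_{p_1} = \setofall \alpha \in \Zp[p_1] \suchthat \pnorm[p_1]{\alpha - \tfrac{r}{s}} < p_1^{-j} \setend$ with $\tfrac{r}{s}$ in lowest terms (so $p_1 \nmid s$), and set $q := p_1^{j}$, so that $\m[p_1]{B_{p_1}} = 1/q$. For $n$ coprime to $p_1$ one has $\pnorm[p_1]{ns} = 1$, so $\tfrac{a}{n} \in B_{p_1}$ is equivalent to $as \equiv rn \pmod q$, i.e.\ to $a$ lying in a single prescribed residue class $c = c(n) \bmod q$ ($s$ being invertible mod $q$). Hence $\bphi[B_{p_1}](n)$ is the number of $a \in \set{1,\dots,n}$ with $(a,n)=1$ and $a \equiv c \pmod q$; since $(q,n)=1$, a Möbius expansion over the divisors of $n$ (each $d \mid n$ is coprime to $q$, so the conditions $d \mid a$ and $a \equiv c \pmod q$ together form a single congruence modulo $dq$) gives
\[
  \bphi[B_{p_1}](n) = \sum_{d \mid n}\mu(d)\parens*{\tfrac{n}{dq} + O(1)}
    = \frac{\varphi(n)}{q} + O\parens*{2^{\omega(n)}}
    = \m[p_1]{B_{p_1}}\,\varphi(n) + O\parens*{2^{\omega(n)}},
\]
with $\omega(n)$ the number of distinct primes dividing $n$ and the implied constant absolute (in particular independent of $c(n)$). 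Since $2^{\omega(n)} = n^{o(1)}$ while $\varphi(n) \gg n/\log\log n$, the displayed limit follows. (One could instead invoke a $p$-adic analogue of the Farey equidistribution cited in \cite{niederreiter}, but the congruence description keeps the count elementary.)

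With this in hand the rest is the proof of Lemma \ref{lem:N0} copied over. Choose $N_0 = N_0(B_{p_1})$ large enough that $\bphi[B_{p_1}](n) \ge \tfrac12 \m[p_1]{B_{p_1}}\,\varphi(n)$ for all $n \ge N_0$ with $p_1 \nmid n$, and also large enough that $\psum_{n=1}^{N_0-1}\bphi[B_{p_1}](n) > 0$ (possible, since this quantity grows with $N_0$); then $\psum_{n=1}^{N_0-1}\bphi[B_{p_1}](n) \ge C_1 (N_0-1)^2 \ge C_1\psum_{n=1}^{N_0-1}\varphi(n)$ for some $C_1 > 0$. Splitting $\psum_{n=1}^{N}\bphi[B_{p_1}](n)$ at $N_0$ and bounding the tail below by $\tfrac12\m[p_1]{B_{p_1}}\psum_{n=N_0}^{N}\varphi(n)$ — which is legitimate because the terms with $p_1 \mid n$ on the left vanish, so the restricted sum effectively runs over $n$ coprime to $p_1 p_2$ — yields $\psum_{n=1}^{N}\bphi[B_{p_1}](n) \ge C_2\psum_{n=1}^{N}\varphi(n)$ with $C_2 = \min\set{C_1,\tfrac12\m[p_1]{B_{p_1}}}$. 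Finally, the natural two-prime strengthening of Lutz's Lemma 4.16 of \cite{lutz} (that $\psum_{n=1}^{N}\varphi(n) \ge C_3 N^2$ with the restricted sum now omitting multiples of $p_1$ or of $p_2$; same Möbius/partial-summation argument) supplies $C_3 > 0$, and $C := C_2 C_3$ with the $N_0$ above completes the proof.

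The step I expect to be the main obstacle is the equidistribution input — pinning down the normalizing constant $\m[p_1]{B_{p_1}}$ and an error term that is $o(\varphi(n))$; once it is recast as the congruence count above, everything else is bookkeeping transported unchanged from Lemma \ref{lem:N0}.
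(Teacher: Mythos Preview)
Your argument is correct, but it takes a different route from the paper. The paper does \emph{not} establish a $p_1$-adic equidistribution statement at all; instead it reduces directly to the already-proved real case. Writing $B_{p_1}$ with integer center $r$ and radius $p_1^{-l}$, the paper parametrizes the rationals of $B_{p_1}$ with denominator $n$ (for $p_1\nmid n$) as $(rn+ap_1^l)/n$, observes that $(rn+ap_1^l,n)=1$ whenever $(a,n)=1$, and thereby exhibits an injection showing $\bphi[B_{p_1}](n)\ge \bphi[B](n)$ for a fixed real interval $B$ of positive length depending only on $r$ and $l$. Lemma~\ref{lem:N0} applied to $B$ then finishes.

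Your congruence count $\bphi[B_{p_1}](n)=\m[p_1]{B_{p_1}}\varphi(n)+O(2^{\omega(n)})$ is sharper: it yields the correct asymptotic constant rather than just a lower bound, and it makes transparent why the $p_1$-adic case is cleaner than the real one (a ball is a single residue class). The paper's approach, by contrast, avoids any new analytic input by piggybacking on the Farey equidistribution already invoked for Lemma~\ref{lem:N0}; the trade-off is a less informative inequality and a somewhat ad hoc choice of auxiliary interval $B$. Either route needs the small observation you made explicit---that $\bphi[B_{p_1}](n)=0$ when $p_1\mid n$, so the $\psum$ is effectively over $n$ coprime to $p_1p_2$ and one ultimately appeals to a two-prime version of Lutz's $\psum\varphi(n)\gg N^2$ estimate.
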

\begin{proof}
  Since the integers are dense in $\Zp[p_1]$, the rationals in the ball $B_{p_1}$ can be expressed as $r + \frac{a}{n}p_1^l$, where $r \in \Z$ is the center of the ball and $p_1^{-l}$ is its radius.
  We will show that \[
    \bphi[B_{p_1}](n) \ge \bphi[B](n),
  \] for all $n$ not divisible by $p$, where $B = B\parens*{0, \frac{r}{p_1^l}}$, and then apply Lemma \ref{lem:N0} to the ball, $B$.
  
  By definition we have
  \begin{align*}
    \bphi[B_{p_1}](n)
      &= \# \setofall a \in \N \suchthat (a,n) = 1, \frac{a}{n} \in [0,1] \cap B_{p_1} \setend \\
      &\ge \# \setofall 0 \le a \le n \suchthat (a,n) = 1, \frac{a}{n} \in B_{p_1} \setend. \\
      \intertext{Since the rationals in $B_{p_1}$ are of the form $r + \frac{a}{n}p_1^l = \frac{rn + a p_1^l}{n}$, we have,}
        \bphi[B_{p_1}](n) &\ge \# \setofall 0 \le r n + a p_1^l \le n \suchthat (rn + a p_1^l,n) = 1 \setend. \\
      \intertext{Since $p_1 \nmid n$ it follows that if $(a,n) = 1$, then $(rn + ap_1^l, n) = 1$, so}
        \bphi[B_{p_1}](n) &\ge \# \setofall 0 \le r n + a p_1^l \le n \suchthat (a,n) = 1 \setend.
  \end{align*}
  Then, since $0 \le rn + a p_1^l$, we must have $a \le \frac{rn}{p_1^l}$, from which we get the lower bound \[
    \# \setofall 0 \le r n + a p_1^l \le n \suchthat (a,n) = 1 \setend
      \ge \# \setofall 0 \le a \le \frac{rn}{p_1^l} \suchthat (a,n) = 1 \setend.
  \]
  Now, observe that this last set is exactly counting the reduced rationals in the interval $\brackets*{0, \frac{r}{p_1^l}}$.
  Thus, we have
  \begin{align*}
    \bphi[B_{p_1}](n)
      &\ge \# \setofall 0 \le a \le \frac{rn}{p_1^l} \suchthat (a,n) = 1 \setend \\
      &\ge \# \setofall a \in \N \suchthat (a,n) = 1, \frac{a}{n} \in [0,1] \cap B \setend
      = \bphi[B](n),
  \end{align*}
  as needed.
\end{proof}

\section{Hausdorff Measure Generalizations}\label{sec:hausdorff_generalization}

For completeness, we define the preliminaries needed in stating the Mass Transference Principle, Theorem \ref{thm:mtp}. We do not need the full force of the Mass Transference Principle. In our application, we only consider the the case $g(x) = x$, so $\Haus[g] = \Haus[1]$ using the notation in \cite{mtp}. We will see shortly that $\Haus[1]$ is exactly the standard Haar measure on $\Zp$ as well as the standard Lebesgue measure on $\R$ (when defined for the respective spaces).

\subsection{Mass Transference Principle}

Throughout, $(X, d)$ is a locally compact metric space such that for every $\rho > 0$ the space can be covered by a countable collection of balls with diameters less than $\rho$.

A \emph{dimension function} $f : \Rpos \to \Rpos$ is a continuous, nondecreasing function such that $f(r) \to 0$ as $r \to 0$.
A function is \emph{doubling} if there exists a constant $\lambda > 1$ such that for $x > 0$ \[
  f(2x) \le \lambda f(x).
\]
Given a doubling dimension function $g$, a dimension function $f$, and a ball $B = B(x,r)$ we define \[
  B^f := B\parens*{x, g^{-1}\parens*{f(r)} }.
\]
Note that for the case $g(x) = x$, we have $B^f = B(x, f(r))$.
Given a ball $B = B(x,r)$ in $X$ and a dimension $f$, we define the \emph{$f$-volume of $B$} to be \[
  V^f(B) := f(r).
\]

Suppose $F \subseteq X$, let $f$ be a dimension function and let $\rho > 0$.
A \emph{$\rho$-cover for $F$} is any countable collection of balls $\set{B_i}_{i \in \N}$ with $\text{radius}(B_i) < \rho$ for every $i \in \N$ and $F \subseteq \bigcup_{i \in \N} B_i$.
We define \[
  \Haus[f]_\rho(F) := \inf \setofall \sum_{i} V^f(B_i) \suchthat \set{B_i} \text{ is a $\rho$-cover for $F$} \setend.
\]
The \emph{Hausdorff $f$-measure of $F$} with respect to the dimension function $f$ is then defined as \[
  \haus[f]{F} := \lim_{\rho \to 0} \Haus[f]_\rho(F).
\]

When $f(r) = r^s$, for some $s \ge 0$, the measure $\Haus[f]$ is the familiar $s$-dimensional Hausdorff measure, which we denote by $\Haus[s]$.
The \emph{Hausdorff dimension}, $\hdim{F}$ of a set $F$ is defined as \[
  \hdim{F} := \inf \setofall s \ge 0 \suchthat \haus[s]{F} = 0 \setend.
\]

With these definition in place, we recall
\begin{theorem}[The Mass Transference Principle, {\cite[Theorem 3]{mtp}}]\label{thm:mtp}
  Let $(X, d)$ be as above and let $\set{B_i}_{i \in \N}$ be a sequence of balls in $X$ with $\mathrm{rad}(B_i) \to 0$ as $i \to \infty$.
  Let $f$ be dimension function and $g$ be a doubling dimension function such that $\frac{f(x)}{g(x)}$ is monotonic and suppose that for any ball $B \in X$ \[
    \haus[g]{B \cap \limsup_{i \to \infty} B_i^f} = \haus[g]{B}.
  \]
  Then for any ball $B$ in $X$ \[
    \haus{B \cap \limsup_{i \to \infty} B_i^g} = \haus[f]{B}.
  \]
\end{theorem}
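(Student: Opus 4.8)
Although Theorem~\ref{thm:mtp} is quoted from \cite{mtp}, let me record how I would approach it; in the present paper it is used only as a black box, always with $g(x)=x$ (so that $\mathcal{H}^{g}$ is Haar measure on $\mathbb{Z}_p$ and $B_i^{g}=B_i$), and I keep that situation in mind.

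\textbf{Step 1: reduce to a lower bound on every ball.} Write $\Lambda:=\limsup_{i\to\infty}B_i^{g}$. Since $B\cap\Lambda\subseteq B$, the inequality $\mathcal{H}^{f}(B\cap\Lambda)\le\mathcal{H}^{f}(B)$ is automatic, so the content is the reverse. The plan is to establish the scale-uniform bound that there is a constant $\kappa>0$, depending only on the doubling constant of $g$ and on $X$, with
\[
  \mathcal{H}^{f}(B\cap\Lambda) \ge \kappa\,V^{f}(B)
\]
for \emph{every} ball $B$ in $X$; this is legitimate because the hypothesis on $\limsup_i B_i^{f}$ is assumed for all balls, so the construction below can be run inside any $B$. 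Granting it, fix a ball $B$. As $f/g$ is monotonic, $f(\rho)/g(\rho)$ tends to a limit $\ell\in[0,\infty]$ as $\rho\to0$. If $\ell=0$ then $\mathcal{H}^{f}$ vanishes on bounded sets and there is nothing to prove. If $\ell\in(0,\infty)$ then $\mathcal{H}^{f}$ is comparable to $\mathcal{H}^{g}$ on $B$, and one upgrades ``positive mass in every sub-ball'' to ``full mass'' via the Lebesgue density theorem for the (now locally finite) measure $\mathcal{H}^{f}$. If $\ell=\infty$ then $\mathcal{H}^{f}(B)=\infty$, and covering $B$ with bounded overlap by $N$ balls $B_{j}$ of comparable radius and using countable additivity of $\mathcal{H}^{f}$ on Borel sets gives $\mathcal{H}^{f}(B\cap\Lambda)\gg\sum_{j}\kappa V^{f}(B_{j})\to\infty$ as $N\to\infty$, so $\mathcal{H}^{f}(B\cap\Lambda)=\infty=\mathcal{H}^{f}(B)$. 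In all cases the theorem follows, so it remains to produce the uniform lower bound.

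\textbf{Step 2: a Cantor subset of $B\cap\Lambda$.} Fix a ball $B$. Build nested finite collections $\mathcal{K}_{0}=\{B\}\supseteq\mathcal{K}_{1}\supseteq\cdots$, each member of $\mathcal{K}_{n}$ lying inside a unique ``parent'' in $\mathcal{K}_{n-1}$. Given a parent $L\in\mathcal{K}_{n-1}$, the hypothesis applied to $L$ gives $\mathcal{H}^{g}(L\cap\limsup_i B_i^{f})=\mathcal{H}^{g}(L)$, so $\mathcal{H}^{g}$-a.e.\ point of $L$ lies in infinitely many balls $B_i^{f}$, necessarily of arbitrarily small radius since $\mathrm{rad}(B_i^{f})=g^{-1}(f(\mathrm{rad}\,B_i))\to0$. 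A Vitali-type covering argument in $X$ then lets one extract from these a finite, pairwise disjoint family of balls $B_i^{f}\subseteq L$, of radii below any prescribed threshold, whose total $\mathcal{H}^{g}$-mass is at least a fixed proportion of $\mathcal{H}^{g}(L)$. Replacing each selected $B_i^{f}=B(x_i,g^{-1}(f(\mathrm{rad}\,B_i)))$ by the corresponding $B_i=B_i^{g}$ (after shrinking the threshold so that $B_i\subseteq L$ too; in the ultrametric spaces $\mathbb{Z}_p$ of our applications containment and disjointness are automatic for small radii) yields the children $\mathcal{C}(L)\subseteq\mathcal{K}_{n}$. Put $\mathbf{K}_{\infty}:=\bigcap_{n}\bigcup_{L\in\mathcal{K}_{n}}L$. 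Since a point of $\mathbf{K}_{\infty}$ lies, for every $n$, in one of the (distinct, shrinking) balls $B_i=B_i^{g}$ chosen at stage $n$, it lies in infinitely many of them, so $\mathbf{K}_{\infty}\subseteq B\cap\Lambda$.

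\textbf{Step 3: the measure, and the main obstacle.} Equip $\mathbf{K}_{\infty}$ with a mass distribution $\mu$ by splitting $V^{f}(B)$ down the construction tree, each child of $L$ receiving a share of $\mu(L)$ proportional to its $f$-volume $V^{f}$; here one uses $\mathcal{H}^{g}(B_i^{f})\asymp g\!\left(g^{-1}(f(\mathrm{rad}\,B_i))\right)=f(\mathrm{rad}\,B_i)=V^{f}(B_i)$ to convert the $\mathcal{H}^{g}$-budget of the extracted family into a budget of $f$-volumes for the children. By the Mass Distribution Principle, it then suffices to check that $\mu(D)\ll V^{f}(D)$ for every ball $D$ of small radius, for then $\mathcal{H}^{f}(\mathbf{K}_{\infty})\gg\mu(\mathbf{K}_{\infty})=V^{f}(B)$, which is the bound needed in Step 1. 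This estimate is immediate when $\mathrm{rad}(D)$ is comparable to the common scale of some level of the construction, since then $D$ meets only boundedly many balls of that level. The delicate case --- and the main obstacle --- is a ball $D$ whose radius lies strictly between two consecutive construction scales: one must bound how many next-level children $D$ can meet (using their pairwise disjointness together with the lower bound on their total $\mathcal{H}^{g}$-mass) and then verify that the resulting estimate for $\mu(D)$ is still $\ll V^{f}(D)$, with a constant that does not deteriorate from level to level. Reconciling these requirements forces a careful choice of the threshold radii at each step of Step~2, and this is exactly where the assumptions that $g$ is doubling and that $f/g$ is monotonic are used; the bookkeeping is the technical core of \cite{mtp}, which I would invoke directly rather than reproduce.
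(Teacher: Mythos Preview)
The paper does not prove Theorem~\ref{thm:mtp} at all: it is quoted verbatim from \cite{mtp} and used only as a black box in the proof of Theorem~\ref{thm:jarnik}. So there is no ``paper's own proof'' to compare your sketch against, and you correctly recognise this at the outset.

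Your outline is broadly the Beresnevich--Velani strategy from \cite{mtp}: build a Cantor-type subset of $B\cap\Lambda$ using the full-measure hypothesis level by level, put a mass distribution on it whose local behaviour is governed by $f$-volumes, and conclude via the Mass Distribution Principle. Steps~2 and~3 describe this accurately, and you are right that the intermediate-scale estimate in Step~3 is where the doubling of $g$ and the monotonicity of $f/g$ are genuinely used.

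One point in Step~1 is loose enough to flag. In the case $\ell=\infty$ you write ``covering $B$ with bounded overlap by $N$ balls $B_j$ of comparable radius'' and then let $N\to\infty$; but balls of radius comparable to that of $B$ can cover $B$ with only boundedly many members, so $N$ cannot tend to infinity that way. What one actually does is cover $B$ by \emph{disjoint} sub-balls of a small common radius $\rho$ (or, in the original argument, avoid this reduction entirely and show directly that the Cantor set carries $\mathcal{H}^f$-mass $\ge \eta\,\mathcal{H}^f(B)$ for an arbitrary $\eta$), and use that $\sum_j V^f(B_j)\asymp f(\rho)\cdot g(\mathrm{rad}\,B)/g(\rho)\to\infty$ as $\rho\to0$ when $f/g\to\infty$. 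Also, with merely bounded overlap rather than disjointness you cannot sum the lower bounds $\mathcal{H}^f(B_j\cap\Lambda)\ge\kappa V^f(B_j)$ to bound $\mathcal{H}^f(B\cap\Lambda)$ from below; you need disjointness (or a packing) for that direction. These are fixable, and since you ultimately defer to \cite{mtp} for the bookkeeping anyway, the sketch is adequate for the purposes of this paper.
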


\subsection{Proof of Theorem \ref{thm:jarnik}}

The proof of Theorem \ref{thm:jarnik} naturally comes in two parts. Either $p_2$ is prime, or infinite; that is, we are either approximating in a $p$-adic space or the Euclidean space, respectively. We take care of both in one fell swoop.

In both cases, the convergence cases follows by an application of a covering argument and using the the Hausdorff-Cantelli Lemma; as in the classical case with the Borel--Cantelli Lemma.
The divergence case relies on the Mass Transference Principle and our Khintchine's Theorem above. To do so, we must first reformulate the problem so as to apply Theorem \ref{thm:p_to_p}.

\begin{proof}
  Suppose $f$ is a dimension function and let $B_{p_1} \subseteq \Zp[p_1]$ be given; here we take $\Zp[p_1]$ to be either the $p$-adic integers or, $\RmodZ$, depending on if $p_1$ is prime or infinite, respectively.
  
  Recall that we can express the $\psi$-approximables as a $\limsup$ set \[
    \pappby[p_2]{\psi}{B_{p_1}} = \limsup_{n \to \infty} (U_n \cup V_n) = \bigcap_{N = 1}^\infty \bigcup_{n \ge N} \parens*{U_n \cup V_n}
  \]
  where
  \begin{equation}\label{eq:limsup_sets}
    U_n = \bigcup_{\substack{a = 1 \\ (a,n) = 1 \\ \frac{a}{n} \in B_{p_1}}}^n B_{p_2}\parens*{ \frac{a}{n}, \psi(n)} \qquad \text{and} \qquad
    V_n = \bigcup_{\substack{a = 1 \\ (a,n) = 1 \\ \frac{n}{a} \in B_{p_1}}}^n B_{p_2}\parens*{ \frac{n}{a}, \psi(n)}.
  \end{equation}
  
  To prove the convergence part, let $\rho > 0$ and let $N(\rho) \in \N$ by such that $\psi(n) < \rho$, for all $n \ge N(\rho)$.
  Then, \[
    \bigcup_{n \ge N} \parens*{U_n \cup V_n} = \bigcup_{n \ge N} \bigcup_{\substack{a = 1 \\ (a,n) = 1 \\ \frac{a}{n} \in B_{p_1}}}^n B_{p_2}\parens*{ \frac{a}{n}, \psi(n)} \cup
    \bigcup_{\substack{a = 1 \\ (a,n) = 1 \\ \frac{n}{a} \in B_{p_1}}}^n B_{p_2}\parens*{ \frac{n}{a}, \psi(n)}
  \]
  is a $\rho$-cover of $\pappby[p_2]{\psi}{B_{p_1}}$.
  We then have
  \begin{align*}
    \Haus[f]_\rho\parens*{\pappby[p_2]{\psi}{B_{p_1} } }
      &\le \sum_{n \ge N} \parens*{ \sum_{\substack{a = 1 \\ (a,n) = 1 \\ \frac{a}{n} \in B_{p_1}}}^n V^f\parens*{ B_{p_2}\parens*{ \frac{a}{n}, \psi(n)} } +
      \sum_{\substack{a = 1 \\ (a,n) = 1 \\ \frac{n}{a} \in B_{p_1}}}^n V^f\parens*{  B_{p_2}\parens*{ \frac{n}{a}, \psi(n)} } } \\
      &\le \sum_{n \ge N} 2 n f(\psi(n)). \stepcounter{equation}\tag{\theequation}\label{eq:rho-cover}
  \end{align*}
  Since we have assumed $\sum\limits_{n \ge N} n f(\psi(n))$ converges, we can make the sum on the right-hand side of \eqref{eq:rho-cover} as small as we like by choosing $\rho$ sufficiently small.
  From the definition of Hausdorff measure we have $\haus[f]{\pappby[p_2]{\psi}{B_{p_1} }} = 0$.
  
  We now proceed to show the divergence case.
  For the sake of notation, enumerate the sets in \eqref{eq:limsup_sets} into a single sequence, $\set{A_i}$.
  Given a ball $A_i = B_{p_2}\parens*{ \frac{a}{n}, \psi(n)}$, we have \[
    A^f_i := B_{p_2}\parens*{ \frac{a}{n}, f( \psi(n)) }.
  \]
  Observe that \[
    \pappby[p_2]{\theta}{B_{p_1}} = \limsup_{i \to \infty} A^f_i,
  \]
  where $\theta = f \circ \psi$.
  Then, in the divergence case, we have \[
    \sum_{n = 1}^\infty n \theta(n) = \sum_{n = 1}^\infty n f(\psi(n)) = \infty.
  \]
  One may easily check that $\theta$ satisfies the regularity condition for Theorem \ref{thm:p_to_p} since $f$ is nondecreasing and doubling and $\psi$ already satisfies the required regularity condition.
  On applying Theorem \ref{thm:p_to_p}, we have \[
    \m[p_2]{\pappby[p_2]{\theta}{B_{p_1}}} = 1.
  \]
    
  Moreover, it is known that $\haus[1]{\cdot} = \m[p_2]{\cdot}$ for $p_2 = \infty$ (i.e. in $\RmodZ$). This follows from our use of the \emph{radius}, as opposed to diameter, in our definition of Hausdorff Measure.
  Similarly, when $p_2$ is a prime, we have $\haus[1]{\cdot} = \m[p_2]{\cdot}$.
  By comparing the definitions of Haar measure in $\Qp$ and the definition of $\Haus[g]$ above, we immediately have equality of the two measures.
  
  Returning to our proof, we have \[
    \haus[1]{\pappby[p_2]{\theta}{B_{p_1}}} = 
    \m[p_2]{\pappby[p_2]{\theta}{B_{p_1}}} = 1
  \]
  Which in turn implies
  \[
    \haus[1]{\Zp[p_2] \cap \limsup_{i \to \infty} A^f_i} = \haus[1]{\pappby[p_2]{\theta}{B_{p_1}}} = 1 = \haus[1]{\Zp[p_2]}.
  \]
  So, we may apply the Mass Transference Principle with $B = \Zp[p_2]$ and $B_i = A_i$ to get
  \begin{align*} 
    \haus[f]{\pappby[p_2]{\psi}{B_{p_1}}}
      &= \haus{\Zp[p_2] \cap \limsup_{i \to \infty} A_i}
      = \haus{\Zp[p_2]},
  \end{align*}
  which concludes the proof.
\end{proof}

\subsection{Proof of Corollary \ref{cor:jarnik-besicovitch}}

The result follows from applying Theorem \ref{thm:jarnik} with $\psi(q) = q^{-\tau}$ and $f(r) = r^s$.
We include the details below.

\begin{proof}
  Recall that, by definition, we have \[
    \hdim{\pappby[p_2]{\tau}{B_{p_1}}} := \inf \setofall s \ge 0 \suchthat \haus[s]{\pappby[p_2]{\tau}{B_{p_1}}} = 0 \setend.
  \]
  On applying Theorem \ref{thm:jarnik} we have
  \begin{equation}\label{eq:jarnik-besicovitch-1}
    \haus[s]{\pappby[p_2]{\tau}{B_{p_1}}} = \begin{cases}
      \haus[s]{\Zp[p_2]} & \sum_{n=1}^\infty n \parens*{n^{-\tau}}^{s} = \infty \\
      0 & \sum_{n=1}^\infty n \parens*{n^{-\tau}}^{s}  < \infty.
    \end{cases}
  \end{equation}
  Note that this can be rephrased as investigating the convergence/divergence of the series \[
    \sum_{n=1}^\infty n^{1 - s\tau}.
  \]
  We know that this series converges if and only if $1 - s \tau < -1$; Equivalently, the series converges if and only if $s > \frac{2}{\tau}$.
  From this, we can rewrite \eqref{eq:jarnik-besicovitch-1} as 
  \[
    \haus[s]{\pappby[p_2]{\tau}{B_{p_1}}} = \begin{cases}
      \haus[s]{\Zp[p_2]} & s \le \frac{2}{\tau} \\
      0 & s > \frac{2}{\tau}.
    \end{cases}
  \]
  Which in turn implies $\hdim{\pappby[p_2]{\tau}{B_{p_1}}} = \frac{2}{\tau}$, as $\haus[s]{\Zp[p_2]} = \infty$ for $s < 1$.
  Note that this last step uses our requirement that $\tau > 2$. In particular, since $\tau > 2$, we have $\frac{2}{\tau} < 1$.
\end{proof}

\subsection*{Acknowledgments}

The author would like to thank his advisor, Felipe Ram\'irez, for his mentorship and ever-present excitement through the many avenues this project has taken.
He would also like to thank Demi Allen for helpful conversations as well as Victor Beresnevich, Alan Haynes, and Matthew Palmer for helpful comments on early preprints of this paper.
Lastly he would like to thank the anonymous referee for corrections and helpful suggestions that greatly improved this paper.

\bibliographystyle{alpha}

\bibliography{draft.bib}

\end{document}